\newtheorem{theorem}{Theorem}
\newtheorem{lemma}{Lemma}
\newtheorem{proposition}{Proposition}
\theoremstyle{remark}
\newtheorem{remark}{Remark}
\newcommand\R{\mathbb{R}}
\newcommand\ud{\,\textnormal{d}}
\newcommand\uds{\textnormal{d}}
\newcommand\AND{\quad\mbox{and}\quad}
\newcommand\ONE{\mathbf{1}}
\newcommand\ii{\textnormal{i}}
\newcommand\ee{\textnormal{e}}
\newcommand\px{\partial_x}
\newcommand\pt{\partial_t}
\newcommand\oo{\omega_0}
\newcommand\po{\phi_\omega}
\newcommand\poo{\phi_{\oo}}
\newcommand\Lo{\Lambda_\omega}
\newcommand\LP{L_+}
\newcommand\LM{L_-}
\newcommand\MP{M_+}
\newcommand\MM{M_-}
\newcommand\QP{Q_+}
\newcommand\QM{Q_-}
\newcommand\cC{\mathcal{C}}
\newcommand\cI{\mathcal{I}}
\newcommand\cJ{\mathcal{J}}
\newcommand\cK{\mathcal{K}}
\newcommand\Xe{X_\alpha}
\newcommand\Ye{Y_\alpha}
\DeclareMathOperator\sech{sech}
\DeclareMathOperator\sgn{sgn}
\title[Asymptotic stability of solitons for cubic-quintic NLS]
{Asymptotic stability of solitary waves for the 1D cubic-quintic Schrödinger equation with no internal mode}
\author{Yvan Martel}
\address{CMLS, \'Ecole polytechnique, CNRS, Institut Polytechnique de Paris, 91128 Palaiseau Cedex, France}
\email{yvan.martel@polytechnique.edu}
\begin{document}
\begin{abstract}
For the Schrödinger equation with a cubic-quintic, focusing-defocusing nonlinearity in one space dimension, we prove 
the asymptotic stability of solitary waves for a large range of admissible frequencies.
For this model, the linearized problem around the solitary waves does not have internal mode nor resonance.
\end{abstract}
\maketitle

\section{Introduction}

In this article, we consider the one-dimensional Schrödinger equation with focusing cubic and defocusing quintic nonlinearities
\begin{equation}\label{NLS}
\ii \pt \psi + \px^2 \psi + |\psi|^2 \psi - |\psi|^4 \psi = 0, \quad (t,x)\in\R\times\R.
\end{equation}
The corresponding Cauchy problem is globally well-posed in the energy space~$H^1(\R)$ (see \emph{e.g.}~\cite{Cbook}) and this model enjoys the conservation laws
\begin{align*}
\mathcal M [\psi] &= \int |\psi|^2 \ud x \tag{Mass}\\
\mathcal P [\psi] &= \Im \int \psi \px \bar \psi \ud x \tag{Momentum}\\
\mathcal E [\psi] &= \int \biggl( \frac 12 | \px \psi|^2 - \frac 14|\psi|^4 + \frac 16 |\psi|^6 \biggr)\uds x. \tag{Energy}
\end{align*}
We recall the Galilean transform, translation and phase invariances of~\eqref{NLS}: if $\psi(t,x)$ is a solution then, for any $\beta,\sigma,\gamma\in \R$, the function
\begin{equation}\label{eq:inv}
\widetilde \psi(t,x)= \ee^{\ii (\beta x-\beta^2 t+\gamma)}\psi(t,x-2 \beta t-\sigma)
\end{equation}
is also a solution.

\smallskip

It follows from well-known arguments (see \emph{e.g.}~\cite{BL}) that for any~$\omega\in (0,\frac 3{16})$, there exists a unique positive even solution~$\po\in H^1(\R)$ of
the equation
\begin{equation}\label{eq:po}
\po'' + \po^3 - \po^5 = \omega \po,\quad x\in \R,
\end{equation}
whereas for $\omega \geq \frac {16}3$, there exists no solution of \eqref{eq:po} in $H^1(\R)\setminus \{0\}$.
Moreover, for $\omega\in (0,\frac 3{16})$, the solution is actually explicit (see \emph{e.g.} ~\cite{CS},~\cite{Oh}, \cite{PKA} and~\cite[Chapter 5]{Ybook})
\[
\po (x) = \sqrt{\frac {4\omega}{1+a_\omega \cosh(2\sqrt{\omega}x)}}\quad\mbox{where}\quad
a_\omega = \sqrt{1-\frac{16}3\omega}.
\]
For any~$\omega\in (0,\frac 3{16})$, the function~$\psi(t,x)=\ee^{\ii\omega t}\po(x)$ is a standing wave solution of~\eqref{NLS}.
The invariances~\eqref{eq:inv} generate a family of traveling waves, of the form
\[
\psi(t,x)=\ee^{\ii(\beta x-\beta^2 t + \omega t +\gamma)} \po(x-2 \beta t-\sigma)
\]
for $\beta,\sigma,\gamma\in \R$.
The stability of these solutions by perturbation of the initial data in the energy space $H^1(\R)$ 
is a classical question.
We recall the orbital stability result for standing waves from~\cite{Oh} and we refer to~\cite{CL,GSS,IK,We85,We86,Ybook} for previous related works.

\begin{proposition}[{\cite[Theorem~3]{Oh}}]\label{PR:1}
For any~$\omega_0 \in (0,\frac 3{16})$ and any~$\varepsilon>0$, there exists~$\delta>0$
 with the following property: if~$\psi_0\in H^1(\R)$,~$\| \psi_0 - \poo\|_{H^1(\R)} < \delta$
and~$\psi$ is the solution of~\eqref{NLS} with~$\psi(0)=\psi_0$, then
\begin{equation}\label{eq:S}
\sup_{t\in \R}\inf_{(\gamma,\sigma)\in \R^2}\|\psi(t,\cdot+\sigma) - \ee^{\ii \gamma} \poo\|_{H^1(\R)}
< \varepsilon.
\end{equation}
\end{proposition}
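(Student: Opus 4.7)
The plan is to follow the classical Lyapunov/variational approach of Cazenave--Lions, Weinstein, and Grillakis--Shatah--Strauss. First I would introduce the conserved action $\Lo[\psi] = \mathcal E[\psi] + \omega \mathcal M[\psi]$, which admits $\poo$ as a critical point (its Euler--Lagrange equation is precisely \eqref{eq:po}). Writing a perturbation as $\psi = \poo + u + \ii v$ with $u,v$ real-valued, a Taylor expansion yields
\[
\Lo[\psi] - \Lo[\poo] = \tfrac 12 \langle \LP u, u\rangle + \tfrac 12 \langle \LM v, v\rangle + O(\|u+\ii v\|_{H^1}^3),
\]
where $\LP = -\px^2 + \oo - 3\poo^2 + 5\poo^4$ and $\LM = -\px^2 + \oo - \poo^2 + \poo^4$. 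Standard Sturm--Liouville arguments give $\LM \geq 0$ with $\ker \LM = \mathrm{span}\{\poo\}$, while $\LP$ has exactly one simple negative eigenvalue and $\ker \LP = \mathrm{span}\{\poo'\}$.

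Next, I would introduce modulation parameters. For $\psi$ close to $\poo$ in $H^1$, the implicit function theorem provides unique smooth $(\gamma(t), \sigma(t))$ such that $\eta(t,x) := \ee^{-\ii\gamma(t)}\psi(t, x + \sigma(t)) - \poo(x)$ satisfies the orthogonality conditions $\langle \Re \eta, \poo'\rangle = 0$ and $\langle \Im \eta, \poo\rangle = 0$, killing the kernels of $\LP$ and $\LM$ respectively. The heart of the argument is then to establish a coercivity estimate
\[
\langle \LP \Re \eta, \Re\eta\rangle + \langle \LM \Im\eta, \Im\eta\rangle \geq c \|\eta\|_{H^1}^2,
\]
valid under these orthogonalities together with the mass constraint $\mathcal M[\psi] = \mathcal M[\poo] + O(\delta^2)$, which controls the remaining dangerous direction $\poo$ along which $\LP$ is negative.

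Once the coercivity is in hand, the conclusion is essentially automatic by a standard continuity argument: combining $\Lo[\psi(t)] - \Lo[\poo] = \Lo[\psi_0] - \Lo[\poo] = O(\delta^2)$ with the coercivity forces $\|\eta(t)\|_{H^1} = O(\delta)$ uniformly in time, which after undoing the modulation yields exactly \eqref{eq:S}.

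The main obstacle, and the only genuinely model-dependent step, is verifying the Vakhitov--Kolokolov condition
\[
\left.\tfrac{\uds}{\uds\omega}\right|_{\omega=\oo} \int \po^2 \ud x > 0
\]
throughout the range $\oo \in (0, \tfrac 3{16})$, which is what converts the spectral information on $\LP, \LM$ into the coercivity estimate above. This is where the explicit formula for $\po$ given in the introduction enters: one computes $\int \po^2 \ud x$ in closed form via a substitution such as $y = \tanh(\sqrt{\omega}\,x)$ (producing an expression in $a_\omega$ involving $\arctan$) and then checks strict monotonicity in $\omega$ by differentiation. This computation is carried out in detail in~\cite{Oh}.
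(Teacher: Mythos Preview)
The paper does not give its own proof of this proposition: it is simply quoted from \cite[Theorem~3]{Oh}, and the only related content in the paper is the remark after Lemma~\ref{LE:1} that $\langle \po,\Lo\rangle>0$ ``is related to orbital stability.'' Your sketch is the standard Cazenave--Lions/Weinstein/GSS argument, which is indeed what Ohta carries out, so the approach is correct in substance.

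Two small points. First, you use the symbol $\Lo$ for the action $\mathcal E+\omega\mathcal M$, but in this paper $\Lo$ already denotes $\omega\,\partial_\omega\po$; pick a different letter. Second, mass conservation gives $\mathcal M[\psi(t)]=\mathcal M[\psi_0]=\mathcal M[\poo]+O(\delta)$, not $O(\delta^2)$; what you actually use is that $\langle \Re\eta,\poo\rangle = O(\delta)+O(\|\eta\|_{L^2}^2)$, which is enough to neutralize the negative direction of $\LP$ via the Vakhitov--Kolokolov condition. Note also that this condition is exactly the inequality $\langle \po,\Lo\rangle>0$ recorded in Lemma~\ref{LE:1}.
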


To complement this stability result, 
we prove the asymptotic stability of a large range of standing waves of~\eqref{NLS}
by perturbations in the energy space.

\begin{theorem}\label{TH:1}
For any~$\omega_0 \in (0,\frac 18]$, there exists~$\delta>0$ with the following property: if~$\psi_0\in H^1(\R)$,~$\| \psi_0 - \poo\|_{H^1(\R)} < \delta$ and~$\psi$ is the solution of~\eqref{NLS} with~$\psi(0)=\psi_0$, then there exist~$\beta_+\in\R$ and $\omega_+\in (0,\frac 3{16})$ such that
for any bounded interval~$I$ of~$\R$
\begin{equation}\label{eq:AS}
\lim_{t\to +\infty} \inf_{(\gamma,\sigma)\in \R^2} \sup_{x\in I}
\big|\psi(t,x+\sigma) - \ee^{\ii \gamma} \ee^{\ii \beta_+ x}\phi_{\omega_+}(x)\big|=0.
\end{equation}
\end{theorem}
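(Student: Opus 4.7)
The overall strategy is the modulation plus virial method, as developed for NLS-type models without internal mode. First, using Proposition~\ref{PR:1} and the implicit function theorem in the four symmetry directions of~\eqref{NLS}, I would write
\[
\psi(t,x)=\ee^{\ii(\beta(t)(x-\sigma(t))+\gamma(t))}\bigl[\phi_{\omega(t)}(x-\sigma(t))+\eta(t,x-\sigma(t))\bigr],
\]
where the remainder~$\eta$ satisfies four orthogonality conditions, one per continuous symmetry (phase, translation, Galilean boost, and $\omega$-scaling). These conditions make~$(\omega,\beta,\sigma,\gamma)$ smooth functions of~$t$ and yield modulation equations of the form $|\dot\omega|+|\dot\beta|\lesssim\|\eta\|_Y^2$ together with $|\dot\sigma-2\beta|+|\dot\gamma-\omega-\beta^2|\lesssim\|\eta\|_Y$, where $\|\cdot\|_Y$ is an exponentially weighted local $L^2$ norm. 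In the moving frame, $\eta$ satisfies an equation of the form $\ii\pt\eta=\mathcal{L}_\omega\eta+N(\eta)$, with~$\mathcal{L}_\omega$ the matrix Schr\"odinger operator whose diagonal components are $\LP=-\px^2+\omega-3\po^2+5\po^4$ and $\LM=-\px^2+\omega-\po^2+\po^4$.

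\smallskip

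The heart of the proof is a pair of localized virial identities for~$\eta$. The first is a standard Morawetz-type virial
\[
\cI[\eta](t)=\Im\int\chi(x/A)\,\bar\eta\,\px\eta\ud x,
\]
whose time derivative controls $\|\px\eta\|_{L^2_{\mathrm{loc}}}$ but leaves a low-frequency defect. To close this gap, I would conjugate~$\LP$ and~$\LM$ by Darboux factorizations built from the explicit profile~$\phi_\omega$, obtaining a transformed perturbation~$\widetilde\eta$ whose virial admits a lower bound of the form $c\|\eta\|_Y^2$ modulo the four symmetry directions already eliminated by the orthogonality conditions. The absence of internal eigenvalues and threshold resonance for~$\mathcal{L}_\omega$ on~$\omega\in(0,\tfrac 18]$ is precisely what forces strict coercivity of this second virial on the relevant subspace. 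Summing the two identities and absorbing the quadratic modulation error yields the dispersive bound
\[
\int_0^{+\infty}\|\eta(t)\|_Y^2\ud t<+\infty.
\]

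\smallskip

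From here the conclusion is standard: $L^1$-integrability of~$\dot\omega$ and~$\dot\beta$ gives limits $\omega(t)\to\omega_+\in(0,\tfrac 3{16})$ and $\beta(t)\to\beta_+\in\R$ by continuity of the modulation chart, while the $L^2_tL^2_{\mathrm{loc}}$ control of~$\eta$ combined with the uniform $H^1$ bound from Proposition~\ref{PR:1} yields $\|\eta(t)\|_{L^\infty(I)}\to 0$ along some sequence $t_n\to+\infty$; a continuity argument (re-running the virial from~$t_n$) upgrades this to a full limit, which is exactly~\eqref{eq:AS} once $\gamma(t)$ and $\sigma(t)$ are absorbed in the infimum.

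\smallskip

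The main obstacle is the construction and coercivity of the transformed virial. The generalized kernel of~$\mathcal{L}_\omega$ is four-dimensional, spanned by the symmetry generators, so the raw virial quadratic form has a large non-negativity defect, and only after conjugation by the appropriate Darboux operator does the associated quadratic form become strictly positive modulo this kernel. Verifying this coercivity uniformly in~$\omega$ on~$(0,\tfrac 18]$, using the explicit form of~$\phi_\omega$ together with the specific algebraic structure of the cubic-quintic potentials appearing in~$\LP$ and~$\LM$, is the crux of the argument; it is here that the restriction~$\omega_0\leq\tfrac 18$ should naturally emerge.
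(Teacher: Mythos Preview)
Your outline matches the paper's strategy closely: modulation with four orthogonality conditions, a first localized virial on $u$ at a large scale $A$, a transformed variable $v$ built from the Darboux-type operator $S^2$ (with $S=\po\cdot\px\cdot\po^{-1}$, so that $S^2\LP\LM=\MP\MM S^2$), a second virial on $v$ at an intermediate scale $B$, a coercivity lemma converting control of $v$ back into control of $u$ via the orthogonality conditions, and then the standard endgame you describe.

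There is, however, a real gap in your account of the second virial. You expect it to be coercive ``modulo the four symmetry directions,'' but that is not where the difficulty lies: the modulation source terms satisfy $S^2\theta_1=S^2\LP\theta_2=0$ and simply disappear from the equation for $v$, so there is no kernel defect to remove at this stage (the orthogonality conditions are used only later, in Proposition~\ref{PR:4}, to invert $v\mapsto u$). The actual obstruction is that after conjugation the pair $(\MP,\MM)=(-\px^2+\omega-\tfrac13\po^4,\,-\px^2+\omega+\po^4)$ still contains one \emph{attractive} potential, and the bare virial on $v$ produces a quadratic form $2\int(\px z_1)^2-\int P_B z_1^2+2\int(\px z_2)^2+3\int P_B z_2^2$ whose $z_1$ part has the wrong sign. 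The paper repairs this with an auxiliary functional $\cK=-\int z_1z_2R_B$, where $R_B$ solves $-\tfrac12 R_B''+\oo R_B=\tfrac32 P_B$; differentiating $\cK$ in time transfers $\tfrac32\int P_B$ from the $z_2$ column to the $z_1$ column, which is affordable precisely because the attractive coefficient in $\MP$ is one third the repulsive one in $\MM$. The restriction $\oo\le\tfrac18$ enters right here, quantitatively: it forces $\|P_B\|_{L^\infty}\le\tfrac15\oo$ and $\|R_B\|_{L^\infty}\le\tfrac7{18}$ (Lemma~\ref{LE:3}), which together with Lemma~\ref{LE:4} keep the error terms generated by $\dot\cK$ strictly smaller than the main positive contribution. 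So the ``crux'' you correctly flag is not a spectral coercivity-modulo-kernel argument but a concrete sign computation exploiting the $1{:}3$ ratio of the conjugated potentials, and this is the mechanism you are missing.
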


\begin{remark}
The orbital stability property \eqref{eq:S} means that the solution stays for all time close to the family of solitary waves and more precisely close to the initial solitary wave $\poo$, up to phase and translation. In particular, in~\eqref{eq:S} one can replace $\poo$ by $\ee^{\ii \beta x} \phi_{\omega}$ for 
$\beta$ small and $\omega$ close to $\omega_0$.
In contrast, the asymptotic stability property~\eqref{eq:AS} says that as $t\to +\infty$, the solution converges locally in space to a final asymptotic soliton, characterized by $\beta_+$ and $\omega_+$, up to phase and translation. As a consequence, the values of $\beta_+$ and $\omega_+$ such that \eqref{eq:AS} holds are unique
and depend in an intricate way of the initial data.
By the time reversibility of equation \eqref{NLS}, the same holds as $t\to -\infty$, with possibly different parameters $\beta_-$ and $\omega_-$.
From the orbital stability, it follows in the context of Theorem~\ref{TH:1} that~$|\beta_\pm|$ and~$|\omega_\pm-\omega_0|$ are arbitrarily small for 
$\| \psi_0 - \poo\|_{H^1(\R)}$ small.

The asymptotic stability property~\eqref{eq:AS} is stated in the $\sup$ norm on any compact interval for the sake of simplicity.
The proof provides additional information on the asymptotic behavior of the solution: there exist $\cC^1$ time dependent functions $(\beta,\sigma,\gamma,\omega) :[0,\infty)\to \R^3\times(0,\frac 3{16})$ with
 $\lim_{\infty} \beta=\beta_+$, $\lim_{\infty} \omega=\omega_+$, such that
the function $u$ defined by
\begin{equation}\label{def:uintro}
u(t,x)=\ee^{-\ii \gamma(t)} \ee^{-\ii \beta(t) x } \psi(t,x+\sigma(t)) - \phi_{\omega(t)}(x)
\end{equation}
satisfies for some constant $c_0>0$,
\begin{equation*}
\int_0^\infty \int_\R \ee^{-c_0 |x|} \left( |\px u(t,x)|^2 + |u(t,x)|^2\right) \uds x
\ud t <\infty.
\end{equation*}
Therefore, there exists a sequence $t_n\to +\infty$ such that for any compact interval~$I$, $\lim_{n\to+\infty} \int_I |\px u(t_n,x)|^2\ud x=0$. The convergence as $t\to +\infty$ is conjectured, but we do not pursue this issue here. Recall that in general the global norm $\|u(t)\|_{H^1(\R)}$
does not converge to zero since by the stability result and the time reversibility of the equation, this would imply that $\psi$ is exactly a solitary wave.
From the proof, one also shows that $\lim_{\infty} \dot \sigma = 2 \beta_+$
and $\lim_{\infty} \dot \gamma = \beta_+^2+\omega_+$.

For $H^1(\R)$ perturbations, it is unlikely that more can be said in general about the decay rate of $u(t)$ and the asymptotic behaviors of $\sigma$ and $\gamma$.
We refer to~\cite[Theorem~1.3]{GNT2} for the lack of decay rate for the perturbation part
and to~\cite[Theorem~2]{MM2} for a discussion on this issue for the Korteweg-de Vries equation.
\end{remark}

\begin{remark}
By the Galilean transform, translation and phase invariances, the family of traveling solitary waves enjoys the same stability properties. In particular, for $(\beta_0,\sigma_0,\gamma_0,\omega_0)\in \R^3\times (0,\frac 18]$, the result of Theorem~\ref{TH:1} holds for initial data sufficiently close to 
$\ee^{\ii\gamma_0}\ee^{\ii\beta_0 x}\poo(x-\sigma_0)$ for any $\beta_0,\sigma_0,\gamma_0\in \R$.
\end{remark}

\begin{remark}
It would simplify the proof to show Theorem~\ref{TH:1} only for small values of $\omega_0$, instead of considering the
explicit range $(0,\frac 18]$.
The value~$\frac 18$ is chosen for its simplicity and it is not sharp in our approach. 
However, a proof for the full range~$(0,\frac {3}{16})$ would certainly require additional arguments because of the specific behavior in the limit~$\omega_0\uparrow \frac3{16}$.
\end{remark}

For the integrable one-dimensional focusing cubic Schr\"odinger equation
\begin{equation}\label{INLS}
\ii \pt \psi + \px^2 \psi + |\psi|^2 \psi= 0,
\end{equation}
the Inverse Scattering Transform theory was successfully applied in~\cite{CP} to prove the asymptotic stability of solitons in~$L^2$ weighted spaces. 
See also~\cite{BJL} and the references in these papers concerning the IST theory.
Note that the asymptotic stability of solitary waves for initial perturbations in the energy space~$H^1(\R)$ as stated in Theorem~\ref{TH:1} is not true in the integrable case.
Indeed, counterexamples are provided by the family of multi-solitons constructed in~\cite[\S 5]{ZS}. More explicitly, formula (2.15) page 333 of~\cite{Ol} with the choice of parameters $b_1=b_2=1$,~$\xi_1=\xi_2=0$,~$\eta_1=\frac 12$,~$\eta_2=\frac \eta2\ll 1$ gives an explicit periodic solution of~\eqref{INLS} with a two soliton structure, which is arbitrarily close in~$H^1(\R)$ to the soliton $\sqrt{2}\sech(x)$ when~$\eta$, parameter related to the size of the small soliton, is small.
In the limit $\omega_0$ small, Theorem~\ref{TH:1} shows that asymptotic stability of solitary waves in the energy space holds for models perturbative of~\eqref{INLS}.
This observation is related to results of asymptotic stability of kinks proved for wave-type models close to the sine-Gordon equation in \cite[Theorems~8 and 9]{KMMV}.

\smallskip

We refer to~\cite{DZ0,DZ} for a description of the long time behavior of solutions of the defocusing cubic Schr\"odinger equation and some of its perturbations.
Several articles are concerned with the one-dimensional cubic Schr\"odinger equation with a potential, see the most recent ones~\cite{GP,G,CM2,GNT2,Mi,Na} and their references.
More generally, there is a vast literature about the asymptotic stability of waves for nonlinear Schr\"odinger equations, with or without potential, for any space dimension and various nonlinearities; see the surveys~\cite{CM,KMMls,Sc07}.
We refer to~\cite{BP1,BP2,CGNT,CG,KS,KNS} for results directly related to the one-dimensional case with no potential.

\smallskip

The presence of internal modes is known to greatly complicate the analysis of asymptotic stability by shifting the problem to the nonlinear level, where a condition, called the Fermi golden rule, then enters into play (see \emph{e.g.} the review~\cite{CM}). 
Roughly, internal modes (as defined in~\cite{CG,CM3,PKA}) generate linear periodic solutions to the linearized evolution equation around the solitary wave. 
 For the Schr\"odinger equation with power nonlinearity~$|u|^{p-1}u$ with~$p\neq 3$ close to~$3$, the existence of internal modes bifurcating from the resonance for~$p=3$ is proved in~\cite{CG} (see~\cite{CGNT} for the case of~$p$ close to~$5$).
For the Schr\"odinger equation~$\ii \pt \psi + \px^2 \psi + |\psi|^2 \psi+|\psi|^4\psi = 0$, with focusing cubic-quintic nonlinearity, the existence of internal modes is shown in~\cite{PKA}.
It is also shown there that for equation~\eqref{NLS}, there does not exist internal mode.
This observation and the proximity of model~\eqref{NLS} to the integrable case were the original motivations to study it in the present paper. Concerning its relevance in Physics, the reader may consult for instance~\cite{CS}, \cite[p.769]{KM}, \cite{Oh,PKA,SP}, \cite[Chapter~5]{Ybook}.

\smallskip

Our approach to prove Theorem~\ref{TH:1} is directly inspired by~\cite{KMM,KMMV} (see also~\cite{KMM1}) proving the asymptotic stability of solitons and kinks for one-dimensional wave-type models, in the absence of resonance and internal mode,
by using virial estimates and a transformed problem. We also point out that algebraic facts on the linearized operator around a solitary wave, established in~\cite{CGNT} (see Lemma~2 in the present paper) are decisive in this approach.
We refer to \S \ref{s:2.3} and \S \ref{s:3.2} for more comments on the proof.
For the one-dimensional cubic Schr\"odinger equation with a real potential, the article~\cite{CM2} uses a similar approach, suitably combined with the notion of refined profiles introduced in~\cite{CM3,CM}, to deal with the presence of more than one discrete mode for the potential.

\smallskip

Finally, we point out that similar virial estimates were used extensively to prove the asymptotic stability of solitons of the subcritical generalized Korteweg-de Vries equations (see~\cite{M,MM2} and references therein), but also to study the singularity formation for the mass critical generalized Korteweg-de Vries equation and nonlinear Schr\"odinger equation (see~\cite{MM,MMR,MR}). Indeed, a sharp description of the bubbling phenomenon for these equations requires sharp estimates on the error term out of reach of standard energy methods.

\subsection*{Notation}
The letters~$u$,~$v$,~$w$ and~$z$ denote complex-valued functions with \emph{e.g.},~$u=u_1+\ii u_2$, $u_1,u_2\in \R$.
The letters~$g$ and $h$ denote real-valued functions.
The Fourier transform of a function~$w$ is denoted by~$\hat w$.
For~$\alpha>0$, set
\[
\Xe = (1-\alpha \px^2)^{-1} \quad
\mbox{\emph{i.e.}}\quad \widehat{\Xe w} (\xi) = \frac {\hat w(\xi)}{1+\alpha \xi^2}\quad \mbox{for~$\xi\in \R$
and $w\in L^2(\R)$.}
\]
Denote~$\langle u , v \rangle = \Re \int u\bar v \ud x$ and~$\|u\|=\sqrt{\langle u , u \rangle}$.
Last, we denote
\[
f(u) = |u|^2u - |u|^4 u,\quad F(u) = \frac{|u|^4}{4} - \frac{|u|^6}6.
\]
In this paper, $C$ denotes various positive constants which do not depend on the parameters
$\oo$, $\varepsilon$, $\alpha$, $A$ and $B$, except when such parameters (like $B$ and~$\alpha$
at the end of the proof of Proposition~\ref{PR:4}) are eventually fixed.

\subsection*{Acknowledgments}
The author would like to thank Thierry Cazenave, Scipio Cuccagna, Philippe Gravejat and Pierre Rapha\"el
for early discussions on the asymptotic stability problem for nonlinear Schr\"odinger equations.
The collaboration with {Micha\l} Kowalczyk, Claudio Mu\~noz and Hanne Van Den Bosch on wave-type models has been decisive for this project.

\section{Preliminaries}

\subsection{Solitary waves}

We gather basic properties of~$\po$ and~$\Lo=\omega \frac{\partial \po}{\partial \omega}$.
\begin{lemma}\label{LE:1}
For any $k\geq 0$, there exists $C_k>0$ such that for any $\omega\in (0,\frac 18]$ and any $x\in \R$,
\begin{equation*}
|\po^{(k)}(x)|\leq C_k \, \omega^\frac{1+k}2 \ee^{-\sqrt{\omega}|x|},\quad
|\Lo^{(k)}(x)|\leq C_k\, \omega^\frac{1+k}2 \left(1+\sqrt{\omega}|x|\right) \ee^{-\sqrt{\omega}|x|}.
\end{equation*}
Moreover, there exists $c>0$ such that $\langle \po,\Lo\rangle \geq c \sqrt{\omega}$.
\end{lemma}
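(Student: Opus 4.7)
The plan is to exploit the explicit formula for $\po$ via the rescaling $y=\sqrt\omega\,x$. Writing $\po(x)=\sqrt\omega\,\Phi(y;a_\omega)$ with $\Phi(y;a)=\sqrt{4/(1+a\cosh(2y))}$, I first observe that for $\omega\in(0,\tfrac18]$ the parameter $a_\omega=\sqrt{1-\tfrac{16\omega}3}$ lies in $[1/\sqrt 3,1)$. Since $\Phi(y;a)$ extends smoothly to $a=1$ (where $\Phi(\,\cdot\,;1)=\sqrt 2\,\sech$), the family $\Phi(\,\cdot\,;a_\omega)$ and all of its $y$-derivatives satisfy uniform bounds $|\Phi^{(k)}(y;a_\omega)|\leq C_k\ee^{-|y|}$. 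Differentiating in $x$ yields $\po^{(k)}(x)=\omega^{(1+k)/2}\Phi^{(k)}(y;a_\omega)$, which gives the first estimate.

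For $\Lo=\omega\,\partial_\omega\po$, I would differentiate the scaling identity in $\omega$. Three contributions appear from the chain rule: (i) differentiation of the prefactor $\sqrt\omega$, (ii) differentiation of the parameter through $\partial_\omega a_\omega=-8/(3a_\omega)$ (uniformly bounded on $(0,\tfrac18]$ thanks to the lower bound on $a_\omega$), and (iii) differentiation of the argument $y=\sqrt\omega\,x$, producing the factor $x/(2\sqrt\omega)=y/(2\omega)$. Only contribution (iii) introduces an $|x|$-factor; after multiplication by $\omega$ it yields a term proportional to $y\,\Phi'(y;a_\omega)$, which is the source of the $(1+\sqrt\omega|x|)$ weight in the estimate for $\Lo$. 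Iterating the $x$-derivative $k$ times and controlling $\partial_a\Phi^{(k)}$ through the same uniform exponential bounds yields the bound on $\Lo^{(k)}$.

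For the scalar product, the plan is to compute $M(\omega):=\|\po\|_{L^2}^2$ in closed form. Using $u=2\sqrt\omega\,x$,
\[
M(\omega)=2\sqrt\omega\int_\R\frac{\ud u}{1+a_\omega\cosh u},
\]
and the classical substitution $t=\tanh(u/2)$ evaluates the integral explicitly. With $s=4\sqrt{\omega/3}\in(0,1)$ one obtains $M(\omega)=\tfrac{\sqrt 3}{2}\ln\frac{1+s}{1-s}$. Since $\langle \po,\Lo\rangle=\omega\int \po\,\partial_\omega\po\,\ud x=\tfrac{\omega}{2}M'(\omega)$, direct differentiation yields $\langle \po,\Lo\rangle=\sqrt\omega/(1-\tfrac{16\omega}3)$, which on $(0,\tfrac18]$ is bounded below by $\sqrt\omega$, giving the lower bound with $c=1$.

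The main care required is in the uniform-in-$\omega$ bookkeeping of the chain rule for the estimates on $\Lo^{(k)}$; I do not anticipate any genuine conceptual difficulty since the restriction $\omega\in(0,\tfrac18]$ keeps $a_\omega$ in a compact set bounded away from the critical value $a=0$ (which would correspond to $\omega=\tfrac{3}{16}$, the edge of the admissible range).
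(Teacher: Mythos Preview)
Your proposal is correct and follows essentially the same approach as the paper: both exploit the explicit formula for $\po$, the rescaling $y=\sqrt\omega\,x$, and the bound $a_\omega\geq 1/\sqrt3$ on $(0,\tfrac18]$ to obtain constants independent of $\omega$. For the lower bound on $\langle\po,\Lo\rangle$, the paper argues more qualitatively---writing $\|\po\|^2=2\sqrt\omega\int(1+a_\omega\cosh y)^{-1}\,\ud y$ and noting that $\omega\mapsto a_\omega$ is decreasing suffices---whereas you evaluate the integral in closed form and differentiate; both routes yield the same bound with $c=1$.
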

\begin{proof}
The bounds follow from the explicit expressions of~$\po$ and $\Lo$. 
The restriction $\omega\in(0,\frac 18]$ implies $a_\omega \geq 1/\sqrt{3}$ and so it allows us to obtain constants $C_k$ independent of $\omega$.

By change of variable, we compute
$\|\po\|^2 = 2 \sqrt{\omega} \int (1+a_\omega \cosh(y))^{-1}\ud y$.
The lower bound on $\langle \po,\Lo\rangle=\frac 12 \omega \frac{\partial }{\partial \omega}\|\po\|^2$
follows from the observation that the map~$\omega\in (0,\frac{3}{16})\mapsto a_\omega$ is decreasing.
The fact that~$\langle \po,\Lo\rangle >0$ is related to orbital stability (\cite{GSS,IK,Oh,We86}).
\end{proof}

\subsection{Spectral properties}\label{S:2.2}
The linearization of~\eqref{NLS} around~$\po$ involves the operators
(see \emph{e.g.} \cite{CGNT,We85})
\[
 \LP = - \px^2 + \omega - 3 \po^2 + 5 \po^4
\AND
 \LM = - \px^2 + \omega - \po^2 + \po^4.
\]
We recall a few properties of the operators $\LP$ and $\LM$ and refer to \cite{We85,We86} and \cite[Lemma 2.2]{CGNT} for details.
The operator~$ \LP$ has exactly one negative eigenvalue. Moreover, the kernel of~$ \LP$ is generated by~$\po'$.
Differentiating the equation of~$\po$ with respect to~$\omega$, we obtain~$ \LP \Lo = -\omega\po$.
Last,~$ \LM\geq 0$ and its kernel is generated by~$\po$. 

\subsection{Conjugate identity}\label{s:2.3}

We adapt to the present context an identity from~\cite[\S 3.4]{CGNT}.
Let
\begin{gather*}
S = \po \cdot \px \cdot \frac 1 {\po},\quad
S^* = - \frac 1 {\po} \cdot \px \cdot \po,\\
\MP = - \px^2 + \omega-\frac13 \po^{4} ,\quad \MM = - \px^2 + \omega +\po^{4}.
\end{gather*}
The above definitions of $S$ and $S^*$ mean that for a function $g$, $Sg = \po \big( \frac{g}{\po}\big)'$
and $S^* g = - \frac 1\po (g\po)'$.

\begin{lemma}\label{LE:2}
For any~$\omega\in (0,\frac 3{16})$,~$S^2 \LP \LM = \MP \MM S^2$.
\end{lemma}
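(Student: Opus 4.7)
The plan is to reduce the stated fourth-order operator identity to a second-order one by means of two natural factorizations of $\LM$ and $\MP$, and then verify the reduction by direct computation. First, I would set $u := \po'/\po$. Since $\LM\po = 0$ gives $\po''/\po = \omega - \po^2 + \po^4$, we have $u' + u^2 = \omega - \po^2 + \po^4$. Multiplying the ODE $\po'' = (\omega - \po^2 + \po^4)\po$ by $\po'$ and integrating once, using $\po, \po' \to 0$ at infinity, yields the first integral
\[
(\po')^2 = \omega \po^2 - \tfrac 12 \po^4 + \tfrac 13 \po^6, \quad \mbox{hence}\quad u^2 = \omega - \tfrac 12 \po^2 + \tfrac 13 \po^4.
\]
A short computation on a smooth test function using these two relations yields the factorizations $S^*S = \LM$ (the potential of $S^*S$ is $u' + u^2 = \omega - \po^2 + \po^4$) and $SS^* = \MP$ (the potential of $SS^*$ is $u^2 - u' = 2u^2 - (u'+u^2) = \omega - \tfrac 13 \po^4$).

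With these factorizations, I would rewrite both sides as
\[
S^2 \LP \LM = S^2 \LP (S^* S) = S(S \LP S^*)S, \qquad \MP \MM S^2 = (SS^*) \MM S^2 = S(S^* \MM S) S,
\]
so the claim reduces to the simpler second-order operator identity
\[
S \LP S^* = S^* \MM S.
\]

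I would prove this by evaluating both sides on a generic smooth function $g$. Both are fourth-order linear differential operators; one checks that the $g''''$ coefficients equal $1$ on each side while the $g'''$ terms cancel. Matching the remaining coefficients of $g''$, $g'$, and $g$ yields three scalar identities in $\po$ and its derivatives, and using $u' = V_- - u^2$ (with $V_- := \omega - \po^2 + \po^4$), $u'' = V_-' - 2 u u'$, and the explicit relation $u^2 = \omega - \po^2/2 + \po^4/3$, each reduces to a polynomial identity in $\po^2$.

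The main obstacle is simply the bookkeeping in this last step: the computation is elementary but moderately lengthy, with no shorter algebraic route apparent. The essential algebraic input is the explicit first integral $u^2 = \omega - \po^2/2 + \po^4/3$, which is specific to the cubic-quintic nonlinearity and is precisely what causes the potentials of $\MP$ and $\MM$ to align with those of $\LM$ and $\LP$ through conjugation by $S$ and $S^*$.
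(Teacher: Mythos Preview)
Your proposal is correct and follows essentially the same route as the paper: both establish the factorizations $\LM = S^*S$ and $\MP = SS^*$, reduce the claim to the intertwining identity $S\LP S^* = S^*\MM S$, and then compose by $S$ on each side. The only cosmetic difference is that the paper obtains $S\LP S^* = S^*\MM S$ by specializing a general formula from \cite{CGNT} (namely $(\px - R)(\px^2 - V_+)(\px + R) = (\px + R)(\px^2 - V_-)(\px - R)$ with $V_\pm = R^2 \pm 3R' + R''/R$ and $R = \po'/\po$), whereas you propose the equivalent direct expansion.
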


The identity proved in Lemma~\ref{LE:2} is different from the ones involved in~\cite{CM2,KMM,KMMV} to define the transformed problem for the Schr\"odinger equation with a real-valued potential or for wave-type equations.
See also Remark~\ref{RK:3}.

The interest of this identity lies on the properties of the operators $\MP$ and $\MM$.
Indeed, the potential of $\MM$ is repulsive (in the sense that $x(\po^4)'\leq 0$ on $\R$)
while the potentials of $\LP$ and $\LM$ are not repulsive.
Recall that the repulsive nature of a potential is decisive to apply a virial argument (see~\cite[Theorem~XIII.60]{RS}).
The potential of $\MP$ is not repulsive, but being in absolute value three times less than the one of $\MM$, it is possible to control it for a large range of values of $\omega$ (see the proof of Proposition~\ref{PR:4} and in particular, the definition of the functional $\cK$).

The intuition that $S^2$ can be factorized at the left of the operator $S^2\LP\LM$
comes from the relations
\begin{gather}
\LM \po=0,\quad \LP\LM(x\po)=\LP (x\LM\po - 2 \po')=-2 \LP \po'=0,\nonumber\\
S\po=0 \AND S^2(x\po)=S\po=0. \label{eq:S2}
\end{gather}

The identity of Lemma~\ref{LE:2} is general and not specific to the nonlinearity in~\eqref{NLS}.
For example, in the case of the integrable equation~\eqref{INLS}, one obtains $\MP=\MM=-\px^2+\omega$.
This was a strong motivation to work in some sense close to the integrable case in the present paper.
We refer to similar observations on the sine-Gordon equation in~\cite{KMMV}.
Last, we point out that such approach by factorization seems limited to one-dimensional problems.

\begin{proof}[Proof of Lemma~\ref{LE:2}]
We recall from~(3.25)-(3.26) of~\cite{CGNT} (with a slight change of notation) 
the general formula
\[
( \px - R) ( \px^2 - V_+)( \px + R) = ( \px + R) ( \px^2 - V_-)( \px - R)
\]
where
\[
V_\pm = R^2 \pm 3 R' + \frac {R''}{R} .
\]
We set~$R = \frac{\po'}{\po}$. Using the identities
\begin{equation}\label{eq:id}
\po''=\omega \po - \po^3 + \po^5 
\AND (\po')^2 = \omega\po^2 - \frac 12 \po^4 + \frac 13 \po^6
\end{equation}
we compute
\begin{equation*}
R^2 = \omega - \frac 12 \po^2 + \frac 13 \po^4,\quad 
R' = -\frac 12 \po^2 + \frac 23 \po^4,\quad
\frac{R''}{R} = -\po^2 + \frac 83 \po^4,
\end{equation*}
and thus
\begin{equation*}
V_+ = \omega - 3 \po^2 + 5 \po^4,\quad
V_- = \omega + \po^4.
\end{equation*}
Observing that
\[
 \px - R = S,\quad \px + R = - S^*,\quad \LP = - \px^2 + V_+,\quad 
\MM = - \px^2 + V_-,
\] 
the general formula implies that~$S \LP S^* = S^* \MM S$.
Following~\cite{CGNT}, we also check that~$ \LM = S^* S$ and~$\MP = S S^*$ using the identities in~\eqref{eq:id}.
Thus, composing~$S \LP S^* = S^* \MM S$ by~$S$ on the left and on the right yields the identity~$S^2 \LP \LM = \MP\MM S^2$.
\end{proof}

\subsection{Invertibility of $\LP$ and $\MM$}\label{S:2.4}
We briefly discuss the invertibility of the operators $\LP$ and $\MM$.
Let~$G$ be the even solution of~$L_+ G=0$ such that~$\po'' G - \po' G' =1$ on~$\R$.
We check that~$|G^{(k)}(x)|\leq C_k \omega^{\frac{k-3}2}\ee^{\sqrt{\omega} |x|}$, for constants $C_k>0$.
For any bounded continuous function~$W$, define
\[
I_+ [W](x) = 
\begin{cases}
- \po'(x) \int_0^x G W - G(x) \int_x^\infty \po' W & \mbox{for~$x\geq 0$}\\
\po'(x) \int_x^0 G W + G(x) \int_{-\infty}^x \po' W & \mbox{for~$x< 0$}
\end{cases}
\]
Note that if~$\langle W,\po'\rangle=0$, then we have~$-\int_x^\infty \po' W=\int_{-\infty}^x \po' W$
so that the two expressions coincide at~$x=0$ and provide a solution to~$ \LP U = W$.

Last, denote by~$H_1$ and~$H_2$ two solutions of 
$\MM H_1=\MM H_2=0$ satisfying
\[
|H_1^{(k)}(x)| \leq C_k \omega^{-\frac 14+\frac k2}\ee^{-\sqrt{\omega} x},\quad
|H_2^{(k)}(x)| \leq C_k \omega^{-\frac 14+\frac k2}\ee^{\sqrt{\omega} x}
\]
for $C_k>0$ 
and $H_1 H_2' - H_1' H_2=1$ on $\R$.
Two such independent solutions exist because $\MM>0$ and so the equation~$\MM U=0$ does not have an~$H^1$ solution. For any bounded continuous function~$W$, define
\[
J_-[W](x) = H_1(x) \int_{-\infty}^x H_2 W + H_2(x) \int_x^{\infty} H_1 W.
\]
This formula defines a solution to~$\MM U =W$.

\section{Asymptotic stability}

\subsection{Modulation}
We fix $\oo \in (0,\frac 3{16})$ and an initial data
$\psi_0\in H^1(\R)$ close to $\poo$. By Proposition~\ref {PR:1}, the global solution~$\psi$ of \eqref{NLS} is close to the family of solitary waves
for all time.
It is standard to decompose~$\psi$ as
\begin{equation}\label{def:u}
\psi(t,y) = \ee^{\ii(\beta(t)(y-\sigma(t)) + \gamma(t))} \left[ \phi_{\omega(t)} (y-\sigma(t)) + u(t,y-\sigma(t))\right ] 
\end{equation}
(see also the equivalent formulation~\eqref{def:uintro} in the Introduction)
where the time-dependent functions~$\beta$,~$\sigma$,~$\gamma$ and~$\omega$ are of class~$\cC^1$ 
and uniquely fixed so that, for all $t\geq 0$, the following orthogonality relations hold
\begin{equation}\label{eq:ortho}
\langle u, \po\rangle = \langle u, x\po\rangle = \langle u, \ii \Lo\rangle =\langle u,\ii \po' \rangle=0. 
\end{equation}
This specific choice of orthogonality relations is known to yield quadratic estimates
for the time derivative of the modulation parameters, for all $t\geq 0$,
\begin{equation}\label{eq:modulation}
\frac{|\dot \beta |}{\sqrt{\omega}} + \frac{| \dot \omega|}\omega + \sqrt{\omega}|\dot\sigma-2\beta|
+ |\dot \gamma - \omega -\beta^2| \leq C\sqrt{\omega} \|\sech(\sqrt{\omega} x/2) u\|^2 .
\end{equation}
See \cite[(i) of Proposition 2.4]{We85} and \emph{e.g.} \cite[proof of Lemma~12]{Ng}.
Moreover, for~$\varepsilon>0$ small, we can formulate the orbital stability result of Proposition~\ref {PR:1} as follows, for all $t\geq 0$,
\begin{equation}\label{eq:small}
\|\px u\| + \|u\| + |\beta| + |\omega-\oo|\leq \varepsilon.
\end{equation}
Using the equation of~$\psi$, we check that $u=u_1+\ii u_2$ satisfies
\begin{equation}\label{eq:u}
\begin{cases}
 \pt u_1 = \LM u_2 + \theta_2 + m_2 - q_2\\
 \pt u_2 = - \LP u_1 - \theta_1 - m_1 + q_1
\end{cases}
\end{equation}
where
\begin{align*}
\theta_1 &= \dot \beta x \po + (\dot \gamma - \omega - \beta^2) \po\\
\theta_2 &= - \frac{\dot \omega}\omega \Lo + (\dot\sigma-2\beta) \po'\\
m_1 &= \dot \beta xu_1 + (\dot \gamma - \omega - \beta^2 ) u_1- (\dot \sigma -2\beta) \px u_2\\
m_2 &= \dot \beta xu_2 + (\dot \gamma - \omega - \beta^2 ) u_2 + (\dot \sigma -2\beta) \px u_1
\end{align*}
and
\begin{align*}
q_1 &= \Re \left\{ f(\po+u)-f(\po)-f'(\po)u_1 \right\} \\
q_2 &= \Im \left\{f(\po+u)-i\frac{f(\po)}{\po}u_2\right\}.
\end{align*}

\subsection{Outline of the proof of Theorem~\ref{TH:1}}\label{s:3.2}
The proof of Theorem~\ref{TH:1} relies on two localized virial estimates.
By localized, we mean that the functionals make sense for $H^1(\R)$ functions, 
by truncating the function $x$ involved in a virial computation at two different large spatial scales $A\gg B\gg 1$.

The first virial estimate is performed on the function $u$, solution of~\eqref{eq:u}.
Since the operators $\LP$ and $\LM$ are not repulsive, a virial computation is not sufficient to prove directly convergence to zero of $u$, but it allows to estimate it at any large spatial scale $A$ 
by a norm of $u$ with a weight $\rho$ related to the spatial decay of $\po$. See Proposition~\ref{PR:2}.

The second virial estimate is performed on the transformed function $v$, whose definition originates from the identity in Lemma~\ref{LE:2}. See \S \ref{S:transformed} for the definition of $v$ and Proposition~\ref{PR:3}.
The equation~\eqref{eq:v} of~$v$ involves the operators $\MP$ and $\MM$ which are better suited for a virial computation. This virial estimate in $v$ at the spatial scale $B$ contains error terms in $u$, in particular, nonlinear terms and modulation terms. In this step, we fix the constants $B$ and $\alpha$, independently of 
$\oo$, $\varepsilon$ and $A$.

Last, in Proposition~\ref{PR:4}, we estimate the function $u$ by the transformed function $v$ for suitable weighted norms, using the special orthogonality relations~\eqref{eq:ortho}.

Gathering the estimates of Propositions~\ref{PR:2}, \ref{PR:3} and \ref{PR:4}, adjusting the choice of~$A\gg1$ and taking $\varepsilon>0$ small enough, we complete the proof of Theorem~\ref{TH:1} in~\S \ref{s:3.5}.

\subsection{First virial estimate}
The following definitions are taken from~\cite{KMM}.
We fix a smooth even function~$\chi:\R\to \R$ satisfying
\[
\mbox{$\chi=1$ on~$[0,1]$,~$\chi=0$ on~$[2,+\infty)$,
$\chi'\leq 0$ on~$[0,+\infty)$.}
\]
For~$K>0$, define
\begin{gather*}
\chi_K(x)=\chi\left(\frac{\oo\sqrt{\oo}} K x\right),\quad 
\eta_K(x)=\sech\left(\frac{2\oo\sqrt{\oo}} K x\right), \\
\zeta_K(x)=\exp\left(-\frac{\oo\sqrt{\oo}}K|x|(1-\chi(\sqrt{\oo} x))\right), \quad
\Phi_K(x)=\int_0^x \zeta_K^2(y) \ud y.
\end{gather*}
Let~$1\ll B\ll A$ be large constants to be defined later.
Define
\[\Psi_{A,B}=\chi_A^2\Phi_B.\]
Technically, in the definitions of $\chi_K$, $\eta_K$ and $\zeta_K$, the multiplicative factor $ {\oo\sqrt{\oo}}/{K}$ will allow us to choose a parameter~$B$ large enough independent of
$\oo$ in the proof of Proposition~\ref{PR:3}.

Last, we introduce a weight function, related to the solitary wave $\po$
\[
\rho(x)=\sech\biggl( \frac{\sqrt{\oo}}{10} x\biggr).
\]
Since $|\omega-\oo|\leq \varepsilon$ by \eqref{eq:small}, for $\varepsilon<\frac{\oo}2$, we
have $\frac{\oo}2 \leq \omega\leq \frac32\oo$, which allows by Lemma~\ref{LE:1} to control $\po$, $\Lo$ and their derivatives in terms of powers of~$\rho$.

\begin{proposition}\label{PR:2}
There exists $C>0$ such that for $\varepsilon>0$ small enough, for any~$T\geq 0$,
\[
\int_0^T 
\left(\|\eta_A \px u \|^2+ \frac {\oo^3}{A^2}\|\eta_A u\|^2\right)\uds t
\leq C\varepsilon+ C\oo \int_0^T \|\rho^2 u\|^2\ud t .
\]
\end{proposition}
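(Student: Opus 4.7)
The plan is to introduce a localized virial functional on $u$ at the wide spatial scale $A/(\oo\sqrt{\oo})$, differentiate in time along the system~\eqref{eq:u}, and integrate over $[0,T]$, using a uniform $L^\infty_t$ bound on the functional to produce the $C\varepsilon$ boundary contribution.

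Concretely, I would take a functional of the form
\[
\cI(t)=\int_\R \Phi_A(x)\,\Im\bigl(\overline{u(t,x)}\,\px u(t,x)\bigr)\uds x,
\]
possibly supplemented by a small quadratic correction $\int w_A(x)|u|^2\uds x$, with $\Phi_A$ and $w_A$ built from the weight $\zeta_A^2$ of the paper and normalized so that $|\cI(t)|\leq C\varepsilon$ uniformly in $t$ and $A$ (using the $H^1$-smallness~\eqref{eq:small}). Differentiating $\cI$ along~\eqref{eq:u} and performing the standard one-dimensional Morawetz integration by parts on the $u_1$ and $u_2$ components, the linear flat parts of $\LP$ and $\LM$ generate the core bilinear form
\[
2\int \zeta_A^2\,|\px u|^2\uds x \; - \; \tfrac12\int (\zeta_A^2)''\,|u|^2\uds x,
\]
plus soliton-core contributions where $\po$ is non-negligible. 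Since $\zeta_A^2\geq c\,\eta_A^2$ pointwise and $(\zeta_A^2)''\sim 4(\oo^3/A^2)\zeta_A^2$ on the exponential tails, a careful sign tracking together with the correction $w_A$ (chosen to move the mass contribution onto the correct side) yields the coercive lower bound $\|\eta_A\px u\|^2+(\oo^3/A^2)\|\eta_A u\|^2$.

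The remaining contributions to $\frac{d}{dt}\cI$ split into three families. (i) The soliton-localized potentials in $\LP,\LM$ produce integrals of the form $\int \Phi_A(\po^2)'|u|^2\uds x$ and $\int \Phi_A(\po^4)'|u|^2\uds x$, pointwise dominated by $C\oo\rho^4|u|^2$ via Lemma~\ref{LE:1}, hence $\leq C\oo\|\rho^2u\|^2$. (ii) The modulation terms $\theta_1,\theta_2,m_1,m_2$ in~\eqref{eq:u} paired with $\Phi_A$, $\px u$ or $u$ contribute, by~\eqref{eq:modulation} combined with~\eqref{eq:small}, at most $C\oo\|\rho^2u\|^2$. (iii) The nonlinear terms $q_1,q_2$ are at least quadratic in $u$ with one $\po$-factor or cubic in $u$, so the pointwise bound $\|u\|_{L^\infty}\leq C\varepsilon$ dominates them by $\varepsilon\|\eta_A\px u\|^2+C\oo\|\rho^2 u\|^2$, absorbable into the main positive term for $\varepsilon$ small. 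Integrating in $t$ from $0$ to $T$ and using $|\cI(T)-\cI(0)|\leq 2C\varepsilon$ yields the stated inequality.

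The main technical difficulty is to organize the quadratic form so that it controls both the gradient and the small-coefficient mass term on the left: since $(\zeta_A^2)''\geq 0$ on the exponential tails, the mass contribution from the naive virial appears with the wrong sign, and extracting the $(\oo^3/A^2)\|\eta_A u\|^2$ on the left requires the correction $w_A$ or an equivalent Poincaré-type reorganization at the scale $A/(\oo\sqrt{\oo})$; the $\oo^3/A^2$ coefficient in the statement reflects precisely this scale, and the ability to isolate the transition region $1/\sqrt{\oo}\lesssim|x|\lesssim 2/\sqrt{\oo}$ (absorbed into the $\rho^2 u$ envelope) from the tails (responsible for the good $\oo^3/A^2$ mass control) relies crucially on the spectral separation between $1/\sqrt{\oo}$ and $A/(\oo\sqrt{\oo})$.
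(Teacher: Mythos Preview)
Your proposal is essentially the paper's proof: same virial functional (the paper writes it as $\cI=\int u_1(2\Phi_A\px u_2+\Phi_A'u_2)$, which equals your $2\int\Phi_A\Im(\bar u\,\px u)$ after symmetrization), same decomposition of $\dot\cI$ into kinetic, nonlinear and modulation pieces, same error handling.

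One correction and two technical points you glossed over. The mass term is \emph{not} obtained via a quadratic correction $\int w_A|u|^2$: for a Schr\"odinger system, differentiating such a term only returns momentum-type expressions and cannot flip the sign of the mass contribution. The paper instead passes to $w=\zeta_A u$, so that the kinetic part reads $2\int|\px w|^2+\int(\ln\zeta_A)''|w|^2$, with $(\ln\zeta_A)''$ supported in the transition region $\sqrt{\oo}|x|\in[1,2]$ and absorbed into $C\oo\|\rho^2 u\|^2$; the $(\oo^3/A^2)\|\eta_A u\|^2$ on the left is recovered \emph{after} time integration by exactly the Poincar\'e-type inequality you already flag as the alternative (namely \cite[Lemma~4]{KMM}: $\int\eta_A|w|^2\leq(CA^2/\oo^3)\int|\px w|^2+(CA/\oo)\int\rho^4|w|^2$). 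Next, the pure $|u|^4$ part of the nonlinearity is not localized on the soliton, so your bound ``$\varepsilon\|\eta_A\px u\|^2$'' is too optimistic; the paper uses $\int\zeta_A^2|u|^4\leq(CA^2\varepsilon^2/\oo^3)\int|\px w|^2$ from \cite[Claim~1]{KMM}, and the actual smallness constraint for absorption is $CA^2\varepsilon^2/\oo^3\leq\tfrac12$. Finally, the $m_k$ estimate is not immediate: the $(\dot\sigma-2\beta)\px u_{3-k}$ terms in $m_1,m_2$ produce cross terms $\int 2\Phi_A\,\px u_1\px u_2$ with the unbounded weight $\Phi_A$, which only become harmless because they cancel between $k=1$ and $k=2$, leaving $(\dot\sigma-2\beta)\int\Phi_A'(u_2\px u_1-u_1\px u_2)$ with $|\Phi_A'|\leq 1$.
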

\begin{proof}
Define
\[
\cI= \int u_1 \left(2 \Phi_A \px u_2+ \Phi_A' u_2\right)
\AND
w=\zeta_A u.
\]
We claim, for all $t\geq 0$,
\begin{equation}\label{eq:dtI}
\dot{\cI} \geq \int |\px w|^2-C \oo\|\rho^2u\|^2.
\end{equation}
From the equation~\eqref{eq:u} of~$(u_1,u_2)$ and~$\int \left(2\Phi_A \px u_k+\Phi_A' u_k\right) u_k = 0$,
\begin{align*}
\dot \cI
& = 
- \sum_{k=1,2}
\int \left(2 \Phi_A \px u_k+ \Phi_A' u_k\right) \px^2 u_k\\
&\quad - \Re \left\{ \int \left(2 \Phi_A \px \bar u + \Phi_A' \bar u \right) \left(f(\po+u)-f(\po) \right)\right\} \\
&\quad +\sum_{k=1,2} \int \left(2 \Phi_A \px u_k+ \Phi_A' u_k\right) (\theta_k+m_k).
\end{align*}
Integrating by parts (see \emph{e.g.}~\cite[Lemma 1]{KMM}), for~$k=1,2$, we compute
\[
- \int \left(2 \Phi_A \px u_k+ \Phi_A' u_k\right) \px^2u_k 
= 2 \int ( \px w_k)^2 + \int (\ln \zeta_A)'' w_k^2
\]
where
\[
(\ln \zeta_A)''
=\frac{\oo^2}{A} \left( \sqrt{\oo} |x|\chi''(\sqrt{\oo} x)
+ 2 \chi'(\sqrt{\oo} x)\sgn(x)\right)
\]
and so (here $\ONE_{[1,2]}$ denotes the indicator function of the interval $[1,2]$)
\begin{equation}\label{:zeta}
\left|(\ln \zeta_A)''\right|
\leq \frac{C\oo^2}A \, \ONE_{[1,2]} (\sqrt{\oo} |x|)
\leq \frac{C}A \oo^2\rho^4(x).
\end{equation}
Thus,
\[
- \int \left(2 \Phi_A \px u_k+ \Phi_A' u_k\right) \px^2u_k 
\geq 2 \int ( \px w_k)^2 - \frac {C}{A} \oo^2 \|\rho^2 w_k \|^2.
\]
For the next term in the expression of~$\dot \cI$, we note that
\begin{multline*}
 \px \left( F(\po + u) - F(\po) -f(\po) u \right)
 =\Re \left\{( \px \bar u) (f(\po+u) - f(\po)) \right\}
\\ + \Re \left\{ \po' (f(\po+u) - f(\po)- f'(\po) u)\right\}
\end{multline*}
and so by integration by parts, we decompose
\begin{align*}
& - \Re \left\{ \int \left(2 \Phi_A \px \bar u + \Phi_A' \bar u \right) \left(f(\po+u)-f(\po) \right)\right\} \\
&\quad = 2 \int \Phi_A' \Re\left\{ F(|\po + u|) - F(\po) -f(\po) u \right\}\\
&\qquad + 2 \int \Phi_A \Re \left\{ \po' (f(\po+u) - f(\po)- f'(\po) u)\right\} \\
&\qquad - \int \Phi_A' \Re \left\{ \bar u \left(f(\po+u)-f(\po) \right)\right\} = I_1+I_2+I_3.
\end{align*}
To estimate $I_1$, $I_2$ and $I_3$, we use the following observations
\[
0<\Phi'_A(x)= \zeta_A^2\leq 1,\quad |\Phi_A(x)|\leq |x| \quad \hbox{on $\R$},
\]
and so (the estimate below is not optimal in terms of power of $\rho$)
\[
\left|\Phi_A(x) \po\right|
\leq \sqrt{\omega} |x| \sech\left(\sqrt{\omega} x\right)
\leq C \rho^4(x).
\]
Moreover, by Lemma~\ref{LE:1},
\[
\|u\|_{L^\infty}\leq C \|u\|_{H^1(\R)}\leq C \varepsilon\leq C \oo.
\]
Thus,
\begin{align*}
|I_1| &\leq C \int \zeta_A^2 \left( \po^2 |u|^2 + |u|^4 \right)
\leq C \oo \int \rho^4 |u|^2 + C \int \zeta_A^2 |u|^4,\\
|I_2| &\leq C \sqrt{\oo} \int |\Phi_A| \po \left( \po |u|^2 + |u|^3 \right)
\leq C \oo \int \rho^4 |u|^2 ,\\
|I_3| &\leq C \int \zeta_A^2 \left( \po^2 |u|^2 + |u|^4 \right)
\leq C \oo \int \rho^4 |u|^2 + C \int \zeta_A^2|u|^4.
\end{align*}
Using the following inequality from~\cite[Claim 1]{KMM}
\[
\int \zeta_A^2 |u|^4 
\leq \frac{C A^2}{\oo^3}\|u\|_{L^\infty}^2 \int | \px w|^2
\leq \frac{C A^2\varepsilon^2}{\oo^3} \int | \px w|^2
\]
we obtain
\[
|I_1|+|I_2|+|I_3| \leq C \oo \int \rho^4 |u|^2 + \frac{C A^2\varepsilon^2}{\oo^3}\int | \px w|^2.
\]
Next, by integration by parts and then using~\eqref{eq:modulation}, for $k=1,2$,
\begin{multline*}
\biggl| \int \left(2 \Phi_A \px u_k+ \Phi_A' u_k \right) \theta_k \biggr| 
 = \biggl| \int u_k\left( 2 \Phi_A \px \theta_k + \Phi_A' \theta_k \right) \biggr| \\
 \leq \|u\|_{L^\infty} \int \left(|x| |\px \theta_k|+|\theta_k|\right) 
\leq C \varepsilon \|\rho^2 u\|^2\leq C \oo \|\rho^2 u\|^2.
\end{multline*}
For the last term in the expression of $\dot \cI$, we integrate by parts
\begin{equation*}
-\int (2 \Phi_A \px u_1+ \Phi_A' u_1 ) m_1
= \dot\beta \int \Phi_A u_1^2
+(\dot \sigma-2\beta) \int (2 \Phi_A \px u_1+ \Phi_A' u_1 ) \px u_2.
\end{equation*}
Combining this identity with its analogue for
$\int (2 \Phi_A \px u_2+ \Phi_A' u_2 ) m_2$, we obtain
\begin{multline*}
-\int (2 \Phi_A \px u_1+ \Phi_A' u_1 ) m_1
-\int (2 \Phi_A \px u_2+ \Phi_A' u_2 ) m_2\\
 = \dot\beta \int \Phi_A |u|^2
+(\dot \sigma-2\beta) \int \Phi_A'(u_2 \px u_1-u_1 \px u_2).
\end{multline*}
Thus, using $\|\Phi_A\|_{L^\infty}+\|x\Phi_A'\|_{L^\infty}\leq C A \oo^{-\frac 32}$,
$|\Phi_A'|\leq1$, $\|u\|_{H^1(\R)}\leq C \varepsilon$
and~\eqref{eq:modulation},
\begin{equation*}
\biggl| \sum_{k=1,2} \int (2 \Phi_A \px u_k+ \Phi_A' u_k ) m_k\biggr|
\leq \frac {CA}{\oo^{\frac 12}}\varepsilon^2 \|\rho^2u\|^2.
\end{equation*}

Gathering these estimates, we have proved
\[
\dot{\cI} \geq 2\bigg( 1- \frac {C A^2 \varepsilon^2}{\oo^3}\bigg)\int |\px w|^2
-C \bigg(\oo+\frac {A\varepsilon^2}{\oo^{\frac 12}}\bigg) \|\rho^2u\|^2.
\]
Taking $\varepsilon$ small such that
\begin{equation}\label{333}
\frac {C A^2 \varepsilon^2}{\oo^3}\leq \frac 12,
\end{equation}
the estimate~\eqref{eq:dtI} is proved.

Now, for any $T\geq0$, using the above estimates for $\Phi_A$ and \eqref{eq:small}, we estimate
\[
|\cI(T)| \leq \left(\|\Phi_A\|_{L^\infty}+\|\Phi_A'\|_{L^\infty}\right)\|u\|_{H^1(\R)}^2
\leq \frac{C A}{\oo^\frac 32} \varepsilon^2\leq C \varepsilon.
\]
Therefore, integrating on $[0,T]$, we obtain
\[
\int_0^T \int | \px w|^2 \leq C\varepsilon + C\oo \int_0^T \|\rho^2u\|^2 .
\]
Now, we use the elementary inequality (see \emph{e.g.}~\cite[Lemma 4]{KMM})
\[
\int \eta_A|w|^2 \leq \frac {CA^2}{\oo^3} \int | \px w|^2
+ \frac{C A}\oo \int \rho^4 |w|^2 ,
\]
which implies
\[
\frac {\oo^3}{A^2} \int_0^T \int \eta_A^2 |u|^2 
\leq C\varepsilon + \frac{C\oo^2}{A} \int_0^T \|\rho^2u\|^2 .
\]
Last, recalling $w=\zeta_A u$, by integration by parts,
\[
\int \zeta_A^2|\px w|^2 
= \int \zeta_A^4 |\px u|^2 - \int \zeta_A^3 \zeta_A'' |u|^2
-2 \int \zeta_A^2 (\zeta_A')^2|u|^2 
\]
and so using $\frac 1C\eta_A \leq \zeta_A^2\leq C \eta_A$
and $|\zeta_A''|+|\zeta_A'|^2\leq C \oo^3 A^{-2}\zeta_A$,
\[
\int \eta_A^2| \px u|^2 
\leq C \int | \px w|^2 + \frac {C \oo^3}{A^2} \int \eta_A^2|u|^2 ,
\]
which is sufficient to complete the proof.
\end{proof}

\subsection{Transformed problem}\label{S:transformed}
For~$\alpha>0$ small to be fixed,
we introduce the function $v=v_1 + \ii v_2$ defined by
\[
v_1 = \Xe^2\MM S^2 u_2
\AND
v_2 = - \Xe^2 S^2 \LP u_1.
\]

By direct computations, using
\[
S^2 = \px^2 - 2\frac{\po'}{\po} \px + \omega- \frac 13 \po^4
\]
and the identities~\eqref{eq:id}, we have
\begin{align*}
\MM S^2 
& = - \px^4 + 2 \px^2 \cdot\frac{\po'}{\po} \cdot \px +\frac 43 \px \cdot \po^4 \cdot \px+ \left(-2 \omega \frac {\po'}{\po} - \frac {14}3 \po^3 \po'\right)\cdot \px\\
&\quad + \omega^2 + 6 \omega \po^4 - \frac {10}3 \po^6 + \frac 73 \po^8
\end{align*}
and
\begin{align*}
S^2 \LP 
& = - \px^4 + 2 \px^2 \cdot\frac{\po'}{\po} \cdot \px +\px \cdot \left(-\po^2+\frac 83 \po^4 \right)\cdot \px\\
&\quad + \left(-2 \omega \frac {\po'}{\po}-2 \po\po' +14\po^3 \po'\right)\cdot \px\\
&\quad + \omega^2 -3\omega \po^2+3\po^4+ \frac{134}3 \omega \po^4 - 38 \po^6 + 25 \po^8.
\end{align*}
For future use, we introduce \begin{align*}
\QM
& = 2 \px^2 \cdot\left(\frac{\Lo'\po-\po'\Lo}{\po^2}\right) \cdot \px +\frac {16}3\px \cdot (\Lo\po^3) \cdot \px\\
&\quad + \left[-2 \omega\frac {\po'}{\po}-2 \omega \left(\frac{\Lo'\po-\po'\Lo}{ \po^2}\right)
- \frac {14}{3} \left(3\po^2\Lo\po'+ \po^3 \Lo' \right) \right]\cdot \px \\
&\quad + 2 \omega^2 + 6 \omega\po^4 + 24\omega \Lo \po^3- \frac {60}{3} \Lo \po^5 + \frac {56}{3} \Lo\po^7
\end{align*}
and
\begin{align*}
\QP
& = 2 \px^2 \cdot\left(\frac{\Lo'\po-\po'\Lo}{\po^2}\right)\cdot \px 
+\px \cdot \left(-2\Lo \po+\frac {32}3 \Lo \po^3 \right)\cdot \px\\
&\quad + \left[-2 \omega\frac {\po'}{\po}-2 \omega\left(\frac{\Lo'\po-\po'\Lo}{ \po^2}\right)\right]\cdot \px\\
&\quad + \left( -2 \Lo\po'-2 \po \Lo' +42 \Lo \po^2 \po' + 14\po^3\Lo'\right)\cdot \px\\
&\quad + 2 \omega^2 -3\omega \po^2-6 \omega\Lo \po +12 \Lo\po^3+ \frac{134}{3}\omega \po^4
+ \frac{536}3 \omega\Lo \po^3\\
&\quad - 228   \Lo\po^5 + 200  \Lo\po^7.
\end{align*}
Note that the operators $\QM$ and $\QP$ are obtained from $\MM S^2$ and $S^2\LP$ by differentiation with respect to $\omega$. Their exact expressions are not so important, only their specific structures (similar to the ones of 
$\MM S^2$ and $S^2\LP$) are used in the proof of Lemma~\ref{LE:8}.

We recall some technical estimates from~\cite{KMMV}.
\begin{lemma}\label{LE:6}
There exists~$C>0$ such that 
for any~$\alpha> 0$ small and~$h\in L^2(\R)$
\begin{align*}
\|\Xe h\| \leq \|h\|,&\quad
\| \px \Xe^\frac12 h\| \leq \alpha^{-\frac 12}\|h\|,\\
\|\rho \Xe h\| \leq C \|\Xe [\rho h]\|,&\quad 
\|\rho^{-1} \Xe [\rho h]\| \leq C \|\Xe h\|,\\
\|\eta_A \Xe h\| \leq C \|\Xe [\eta_A h]\|,&\quad 
\|\eta_A^{-1} \Xe [\eta_A h]\| \leq C \|\Xe h\|,\\
\|\rho^{-1} \Xe \px^2[\rho h]\| \leq C \alpha^{-1}\| h\|,&\quad
\|\rho^{-1} \Xe \px[\rho h]\| \leq C \alpha^{-\frac 12}\|h\|,\\
\|\eta_A \Xe \px^2h\| \leq C \alpha^{-1}\|\eta_A h\|,&\quad
\|\eta_A \Xe \px h\| \leq C \alpha^{-\frac 12}\|\eta_A h\|.
\end{align*}
\end{lemma}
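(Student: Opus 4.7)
The plan is to treat the ten inequalities in three groups, all of which follow from the explicit Fourier symbol $(1+\alpha\xi^2)^{-1}$ of $\Xe$ together with a single commutator computation. First, the two bounds on the top line are pure Fourier multiplier estimates: since $\widehat{\Xe h}(\xi) = \hat h(\xi)/(1+\alpha\xi^2)$, the symbol is bounded by $1$, giving $\|\Xe h\|\leq \|h\|$, and the symbol $\ii\xi/(1+\alpha\xi^2)^{1/2}$ of $\px\Xe^{1/2}$ has modulus at most $\alpha^{-1/2}$.

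Second, for the weighted bounds on lines two and three, I would start from the commutator identity
\[
[\rho,\Xe]h = -\alpha\,\Xe\bigl(\rho''\Xe h + 2\rho'\px\Xe h\bigr),
\]
obtained by conjugating $[\Xe^{-1},\rho]=-\alpha(\rho''\cdot + 2\rho'\px\cdot)$ by $\Xe$ on both sides, and combine it with the integration by parts inside $\Xe$,
\[
\Xe[\rho'\px \Xe h] = \px \Xe[\rho'\Xe h]-\Xe[\rho''\Xe h],
\]
to rewrite
\[
\rho\Xe h = \Xe[\rho h] + \alpha\Xe[\rho''\Xe h] - 2\alpha\px\Xe[\rho'\Xe h].
\]
Using $|\rho^{(k)}|\leq C\oo^{k/2}\rho$ and the first-line bounds gives $\|\alpha\Xe[\rho''\Xe h]\|\leq C\alpha\oo\|\rho\Xe h\|$ and $\|2\alpha\px\Xe[\rho'\Xe h]\|\leq C\sqrt{\alpha\oo}\|\rho\Xe h\|$, so for $\alpha$ small enough (uniformly in $\oo\in(0,\frac{3}{16}]$) the two error terms can be absorbed on the left-hand side, yielding (3). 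For (4), I would reverse the identity and control the remaining terms by a kernel/Schur argument exploiting $\rho^{-1}(x)\rho(y)\leq 2\cosh(\sqrt{\oo}(x-y)/10)$, which is absorbed by the Green kernel of $\Xe$ provided $\alpha\oo$ stays bounded. The $\eta_A$ analogues (5)-(6) are identical, using the even smaller factors $|\eta_A^{(k)}|\leq C(\oo^{3/2}/A)^k\eta_A$.

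Third, the derivative bounds reduce to what precedes. The second-derivative estimates (7) and (9) follow from the algebraic identity $\alpha\Xe\px^2 = \Xe - I$, which gives for instance
\[
\rho^{-1}\Xe\px^2[\rho h] = \alpha^{-1}\bigl(\rho^{-1}\Xe[\rho h] - h\bigr),
\]
so that (4) yields (7), and (6) yields (9). For the first-derivative bounds (8) and (10), I would write $\Xe\px[\rho h] = \px\Xe[\rho h]$, expand it as $\rho'\Xe h + \rho\px\Xe h$ plus a commutator remainder, and control each piece using the Fourier bound $\|\px\Xe^{1/2}\|\leq \alpha^{-1/2}$ together with the weighted bounds already established.

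The main obstacle is purely combinatorial: each commutator must produce a factor of $\sqrt{\alpha\oo}$ or $\alpha\oo$ (respectively $\sqrt{\alpha}\,\oo^{3/2}/A$ or $\alpha\oo^3/A^2$) at just the right place so that absorption on the left-hand side succeeds and the final constants are uniform in $\oo\in(0,\frac{3}{16}]$ and in $\alpha$ small. The structural reason this works is that $|\rho^{(k)}/\rho|$ and $|\eta_A^{(k)}/\eta_A|$ are uniformly bounded by powers of $\sqrt{\oo}$ and $\oo^{3/2}/A$ respectively, which is the quantitative expression of the fact that both weights are slowly varying on the scale $\sqrt{\alpha}$ of the smoothing kernel of $\Xe$.
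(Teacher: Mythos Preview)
Your approach is sound and in the same spirit as the reference the paper cites (the paper itself only spells out the last four bounds, deferring lines~1--3 to \cite[Lemma~4.7]{KMMV}). Two remarks are in order.

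\medskip

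\textbf{A gap in your treatment of (4).} The Schur/kernel bound you sketch, using $\rho^{-1}(x)\rho(y)\leq 2\cosh(\sqrt{\oo}(x-y)/10)$ against the Green kernel $\tfrac1{2\sqrt\alpha}\ee^{-|x-y|/\sqrt\alpha}$ of $\Xe$, yields only $\|\rho^{-1}\Xe[\rho h]\|\leq C\|h\|$, which is \emph{weaker} than the stated $\leq C\|\Xe h\|$. If you ``reverse the identity'' as you say and write
\[
\rho^{-1}\Xe[\rho h]=\Xe h+\alpha\rho^{-1}\Xe[\rho''\Xe h]+2\alpha\rho^{-1}\Xe[\rho'\px\Xe h],
\]
the second remainder is fine via Schur ($\leq C\alpha\oo\|\Xe h\|$), but the third involves $\px\Xe h$, and a direct Schur estimate produces $\|\Xe^{1/2}h\|$ rather than $\|\Xe h\|$. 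One clean fix: first integrate by parts as you did for (3), then apply the Schur argument to the operator $\rho^{-1}\px\Xe\rho$ (whose kernel is $\partial_xK_\alpha(x-y)\rho(y)/\rho(x)$, with $L^1$-in-$y$ norm $\leq C\alpha^{-1/2}$), obtaining $2\alpha\|\rho^{-1}\px\Xe[\rho'\Xe h]\|\leq C\sqrt{\alpha\oo}\,\|\Xe h\|$. Alternatively, a Neumann series for $(\rho^{-1}\Xe\rho)\Xe^{-1}$ works. Either way, a sentence is needed.

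\medskip

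\textbf{A difference from the paper on lines 4--5.} Your use of the algebraic identity $\alpha\Xe\px^2=\Xe-I$ to deduce (7) and (9) from (4) and (5) is slicker than the paper's argument, which instead moves the weight inside via the commutator bounds and then expands $\eta_A\px^2 h=\px^2(\eta_A h)-2\px(\eta_A'h)+\eta_A''h$ before applying the Fourier multiplier estimates. Both routes are short; yours avoids the product-rule bookkeeping. (Minor slip: it is (5), not (6), that feeds into (9) via your identity, since you need $\|\eta_A\Xe h\|\leq C\|\eta_A h\|$.) For (8) and (10) your sketch is vague but correct once (4)--(6) are in hand: write $\px[\rho h]=\rho(\px h+(\rho'/\rho)h)$ and apply (4) directly.
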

\begin{proof}
These estimates follow directly from~\cite[Lemma 4.7]{KMMV}
(see also~\cite[Lemma 5]{KMM}) except the 
last two lines. We prove the estimates $\|\eta_A \Xe \px^2h\|$ and $\|\eta_A \Xe \px h\|$.
First,
\[
\|\eta_A \Xe \px^2 h\|
\leq \|\Xe [\eta_A \px^2 h]\|.
\]
Using~$\eta_A \px^2 h= \px^2(\eta_A h) - 2 \px(\eta_A' h)+\eta_A'' h$
and $|\eta_A'|+|\eta_A''|\leq C \eta$, we have
\begin{equation*}
\|\Xe [\eta_A \px^2 h]\|
\leq \alpha^{-1} \|\eta_A h\|
+ 2\alpha^{-\frac 12 } \|\eta_A' h\|
 + \|\eta_A''h\|
\leq C \alpha^{-1} \|\eta_Ah\|.
\end{equation*}
Similarly,
\begin{equation*}
\| \Xe [\eta_A \px h]\|
\leq \| \Xe [ \px (\eta_Ah)]\|
+ \| \Xe [\eta_A'h]\|
\leq C \alpha^{-\frac 12}\| \eta_Ah\|.
\end{equation*}
The estimates on $\|\rho^{-1} \Xe \px^2[\rho h]\|$, $\|\rho^{-1} \Xe \px[\rho h]\|$
are proved similarly.
\end{proof}

\begin{lemma}\label{LE:7}
There exists~$C>0$ such that for any~$\alpha>0$ small and $g\in H^1(\R)$
\begin{align*}
\| \eta_A \Xe^2 \MM S^2g \|+\| \eta_A \Xe^2S^2 \LP g \| & \leq C \big(\alpha^{-\frac 32} \|\eta_A \px g\|+\oo^2\|\eta_A g\|\big),\\
\| \eta_A \px \Xe^2 \MM S^2g\|+\| \eta_A \px \Xe^2S^2 \LP g\| & \leq C \big(\alpha^{-2} \|\eta_A \px g\|+ \oo^\frac 52\|\rho^2 g\|\big).
\end{align*}
\end{lemma}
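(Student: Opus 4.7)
My approach is to substitute the explicit expressions for $\MM S^2$ and $S^2\LP$ recalled just before Lemma~\ref{LE:6} and to bound each resulting term using Lemma~\ref{LE:6}. Both operators share the schematic structure
\[
-\px^4 \;+\; \px^2\cdot P(x)\cdot\px \;+\; \px\cdot Q(x)\cdot\px \;+\; R(x)\cdot\px \;+\; V(x),
\]
with explicit coefficients $P$, $Q$, $R$, $V$ built from $\po$, $\po'$ and $\po'/\po$. Using Lemma~\ref{LE:1} and the identities~\eqref{eq:id} one reads off the size bounds $\|P\|_{L^\infty}\leq C\sqrt{\oo}$, $\|Q\|_{L^\infty}\leq C\oo^2$, $\|R\|_{L^\infty}\leq C\oo^{3/2}$ and $\|V\|_{L^\infty}\leq C\oo^2$, while the derivative $V'$ is a finite combination of $\oo\po^3\po'$, $\po^5\po'$, $\po^7\po'$ and is therefore pointwise dominated by $C\oo^{5/2}\rho^4$.

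Iterating the bounds $\|\eta_A\Xe\px h\|\leq C\alpha^{-1/2}\|\eta_A h\|$ and $\|\eta_A\Xe\px^2 h\|\leq C\alpha^{-1}\|\eta_A h\|$ from Lemma~\ref{LE:6}, together with $\|\eta_A\Xe f\|\leq C\|\eta_A f\|$, I would first establish
\[
\|\eta_A\Xe^2\px^j h\|\;\leq\; C\alpha^{-j/2}\|\eta_A h\|, \qquad j=1,2,3,4,
\]
by decomposing $\Xe^2\px^j$ as a composition of $\Xe\px$ and $\Xe\px^2$ factors (for instance $\Xe^2\px^3=(\Xe\px)(\Xe\px^2)$). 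To prove the first estimate, I would factor one derivative out of $g$ in each term of $\MM S^2 g$: the top-order contribution is $\Xe^2\px^4 g=\Xe^2\px^3(\px g)$, bounded by $C\alpha^{-3/2}\|\eta_A\px g\|$; the terms with $P$, $Q$ and $R$ contribute $C\alpha^{-1}\sqrt{\oo}\|\eta_A\px g\|$, $C\alpha^{-1/2}\oo^2\|\eta_A\px g\|$ and $C\oo^{3/2}\|\eta_A\px g\|$ respectively, all absorbed into $C\alpha^{-3/2}\|\eta_A\px g\|$ for $\alpha$ small. The multiplicative term is handled directly: $\|\eta_A\Xe^2(Vg)\|\leq C\oo^2\|\eta_A g\|$.

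For the second estimate, I would commute $\px\Xe^2=\Xe^2\px$ and repeat the procedure; the top-order term now contributes $\Xe^2\px^4(\px g)$ and yields $C\alpha^{-2}\|\eta_A\px g\|$, and the intermediate terms all fit within the same bound. The one point requiring care is the multiplicative term, which I would split as
\[
\px\Xe^2(Vg) \;=\; \Xe^2(V'g) + \Xe^2(V\px g).
\]
The first piece, using $|V'|\leq C\oo^{5/2}\rho^4$ and $\eta_A\leq 1$, is bounded by $C\oo^{5/2}\|\eta_A\rho^4 g\|\leq C\oo^{5/2}\|\rho^2 g\|$; the second piece gives $C\oo^2\|\eta_A\px g\|\leq C\alpha^{-2}\|\eta_A\px g\|$. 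The analysis for $S^2\LP$ is strictly analogous since its coefficients obey the same qualitative size and decay bounds.

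The main obstacle I expect is purely clerical bookkeeping: each coefficient of $\MM S^2$ and $S^2\LP$ must be tracked for its $\oo$-size and its decay in~$\rho$ and paired with the correct bound from Lemma~\ref{LE:6}. The only conceptual subtlety arises in the second estimate, where for each intermediate term one must decide whether to let the outer~$\px$ hit the coefficient (producing an $\oo$-small decaying factor contributing to the $\oo^{5/2}\|\rho^2 g\|$ piece) or to pass it through to~$\px g$ and absorb into the $\alpha^{-2}\|\eta_A\px g\|$ piece; for the top-order part only the latter is sharp, while for the multiplicative part only the former produces the decay needed for the $\rho^2$ weight.
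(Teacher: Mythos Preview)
Your proposal is correct and matches the paper's proof essentially line by line: the paper likewise substitutes the explicit expressions for $\MM S^2$ and $S^2\LP$, bounds each term with the appropriate power of $\alpha^{-1/2}$ from Lemma~\ref{LE:6}, and for the second inequality splits the zeroth-order piece as $\px(Vg)=V\px g+V'g$ exactly as you do, using $|\po'|\leq C\rho^2$ to obtain the $\|\rho^2 g\|$ contribution. One harmless slip: for $S^2\LP$ the coefficient $Q=-\po^2+\tfrac83\po^4$ satisfies only $\|Q\|_{L^\infty}\leq C\oo$ (not $C\oo^2$), but this still feeds into the $\alpha^{-3/2}\|\eta_A\px g\|$ bound and does not affect the argument.
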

\begin{proof}
We prove the two estimates for $\Xe^2 \MM S^2g$. The proof for $\Xe^2S^2 \LP g$ is identical.
By Lemma~\ref{LE:6},
\begin{equation*}
\|\eta_A \Xe^2 \px^4 g\|
\leq C \alpha^{-\frac 32}\| \eta_A \px g\|,\quad 
\|\eta_A \Xe^2 \px^5 g\| 
\leq C \alpha^{-2}\| \eta_A \px g\|.
\end{equation*}
By Lemma~\ref{LE:6} and~$| \frac{\po'}{\po}|\leq C\sqrt{\oo} \leq C$ (see \eqref{eq:id}), we have
\begin{align*}
\Big\| \eta_A \Xe^2 \px^2 \cdot \frac{\po'}{\po}\cdot \px g \Big\|
&\leq C \alpha^{-1} \|\eta_A \px g\|,\\
\Big\| \eta_A \Xe^2 \px^3 \cdot \frac{\po'}{\po}\cdot \px g \Big\|
&\leq C \alpha^{-\frac 32} \|\eta_A \px g\|.
\end{align*}
Similarly,
\begin{align*}
\|\eta_A \Xe^2 \px \cdot \po^4 \cdot \px g\|
&\leq C \alpha^{-\frac 12}\| \eta_A \px g\|,\\
\|\eta_A \Xe^2 \px^2 \cdot\po^4 \cdot \px g\|
&\leq C \alpha^{-1}\| \eta_A \px g\|,
\end{align*}
and
\begin{align*}
\Big\|\eta_A \Xe^2 \Big(2 \omega \frac {\po'}{\po} +\frac {14}3 \po^3 \po'\Big) \cdot \px g\big\|
&\leq C \| \eta_A \px g\|,\\
\Big\|\eta_A \Xe^2 \px\Big(2 \omega \frac {\po'}{\po} + \frac {14}3 \po^3 \po'\Big) \cdot \px g\Big\|
&\leq C \alpha^{-\frac 12}\| \eta_A \px g\|
\end{align*}
Moreover,
\[
\Big\|\eta_A \Xe^2 \Big(\omega^2 + 6 \omega \po^4 - \frac {10}3 \po^6 + \frac 73 \po^8\Big)g\Big\|
\leq C \oo^2\|\eta_A g\|.
\]
Last, we observe that
\begin{align*}
 \px \left[ \left(\omega^2 + 6 \omega \po^4 - \frac {10}3 \po^6 + \frac 73 \po^8\right) g\right]
&= \left(\omega^2 + 6 \omega \po^4 - \frac {10}3 \po^6 + \frac 73 \po^8\right) \px g
\\&\quad + \left(24 \omega \po^3 - 20 \po^5 + \frac {56}3 \po^7\right)\po'g.
\end{align*}
As before,
\[
\Big\|\eta_A \Xe^2 \Big(\omega^2 + 6 \omega \po^4 - \frac {10}3 \po^6 + \frac 73 \po^8\Big) \px g\Big\|
\leq C \| \eta_A \px g\|
\]
and since~$|\po'|\leq C \rho^2$,
\[
\Big\|\eta_A\Xe^2 \Big(24 \omega \po^3 - 20 \po^5 + \frac {56}3 \po^7\Big) \po' g\Big\|
\leq C \oo^{\frac 72}\| \rho^2 g\|.
\]
This completes the proof of Lemma~\ref{LE:7}.
\end{proof}

Applying Lemma~\ref{LE:7} to $u_2$ and $u_1$, we obtain the following result.
\begin{lemma}\label{LE:9}
There exists~$C>0$ such that for any~$\alpha>0$ small,
\begin{align*}
\| \eta_A v \| & \leq C \big(\alpha^{-\frac 32} \|\eta_A \px u\|+\oo^2\|\eta_A u\|\big),\\
\| \eta_A \px v\| & \leq C \big(\alpha^{-2} \|\eta_A \px u\|+ \oo^\frac52\|\rho^2 u\|\big).
\end{align*}
\end{lemma}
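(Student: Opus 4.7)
The plan is to observe that the lemma is an almost immediate consequence of Lemma~\ref{LE:7}: the two scalar pieces $v_1=\Xe^2\MM S^2 u_2$ and $v_2=-\Xe^2 S^2\LP u_1$ are precisely of the form treated there, with $g=u_2$ and $g=u_1$ respectively, and the final bounds for the complex-valued function $v=v_1+\ii v_2$ will follow by adding the two scalar estimates.

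Concretely, I would first apply the first estimate of Lemma~\ref{LE:7} with $g=u_2$ to get
\[
\| \eta_A v_1\| = \|\eta_A \Xe^2 \MM S^2 u_2\|
\leq C\bigl(\alpha^{-\frac32}\|\eta_A \px u_2\| + \oo^2\|\eta_A u_2\|\bigr),
\]
and with $g=u_1$ applied to the $\Xe^2 S^2 \LP g$ estimate to obtain the analogous bound for $\|\eta_A v_2\|$. Since $|v|^2=v_1^2+v_2^2$ and likewise $|u|^2=u_1^2+u_2^2$, $|\px u|^2=(\px u_1)^2+(\px u_2)^2$, summing these two scalar estimates and using $\|\eta_A \px u_k\|\leq \|\eta_A \px u\|$, $\|\eta_A u_k\|\leq \|\eta_A u\|$ for $k=1,2$, yields the first inequality of Lemma~\ref{LE:9}.

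For the gradient estimate, I would apply the second inequality of Lemma~\ref{LE:7} in the same way, once to $u_2$ (via $\Xe^2\MM S^2$) and once to $u_1$ (via $\Xe^2 S^2\LP$), to produce
\[
\|\eta_A \px v_k\|\leq C\bigl(\alpha^{-2}\|\eta_A \px u\| + \oo^{\frac52}\|\rho^2 u\|\bigr),\qquad k=1,2,
\]
and combine these with $|\px v|^2 = (\px v_1)^2+(\px v_2)^2$, $\|\rho^2 u_k\|\leq \|\rho^2 u\|$. Absorbing the universal constant 2 into~$C$, this delivers the second inequality and completes the proof.

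There is no real obstacle: the only thing to check is that the relevant $L^2$ norms of a complex-valued function control (up to a fixed constant) the same norms of its real and imaginary parts, and that the operator producing $v_j$ is exactly one of the two treated in Lemma~\ref{LE:7}. All of the nontrivial work — the commutator estimates between $\Xe^2$ and the differential operators $\MM S^2$ and $S^2\LP$, and the use of Lemma~\ref{LE:6} to trade $\px$'s for powers of $\alpha^{-1/2}$ — has already been carried out in Lemma~\ref{LE:7}.
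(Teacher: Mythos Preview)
Your proposal is correct and matches the paper's own proof exactly: the paper simply states that Lemma~\ref{LE:9} is obtained by applying Lemma~\ref{LE:7} to $u_2$ and $u_1$, which is precisely what you spell out.
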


\begin{lemma}\label{LE:8}
There exists~$C>0$ such that for any~$\alpha>0$ small and $g\in H^1(\R)$
\begin{align*}
\| \eta_A \Xe^2 \QM g \|+\| \eta_A \Xe^2\QP g \| & \leq C \big(\alpha^{-1}\oo^{\frac 12} \|\eta_A \px g\|+\oo^2\|\eta_A g\|\big),\\
\| \eta_A \px \Xe^2 \QM g\|+\| \eta_A \px \Xe^2\QP g\| & \leq C \big(\alpha^{-\frac 32}\oo^{\frac 12} \|\eta_A \px g\|+ \oo^\frac 52\|\rho^2 g\|\big).
\end{align*}
\end{lemma}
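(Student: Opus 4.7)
The strategy is to mimic the proof of Lemma~\ref{LE:7} term by term, applied now to the explicit expressions for $\QM$ and $\QP$ displayed above. The structural observation noted just after their definitions is the key: $\QM$ and $\QP$ are obtained by differentiating $\MM S^2$ and $S^2\LP$ with respect to $\omega$. Since the top-order piece $-\px^4$ is $\omega$-independent, it disappears upon differentiation, so $\QM$ and $\QP$ are of order three (rather than four) in $\px$. This is precisely what improves the power of $\alpha^{-1}$ over $\alpha^{-3/2}$ relative to Lemma~\ref{LE:7}; the extra factor $\oo^{1/2}$ comes from the coefficients of the surviving 3rd-order terms, which are of order $\sqrt\omega$ by $|\po'/\po|\leq C\sqrt\omega$ (from \eqref{eq:id}) or by an analogous bound for $(\Lo/\po)'$.

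Concretely, I would first enumerate the monomials in $\QM$ (and, identically, $\QP$), grouped by order of differentiation. Using Lemma~\ref{LE:1} together with $|\po'/\po|\leq C\sqrt\omega$, each coefficient admits a pointwise bound of the form $C\oo^{m/2}\rho^N(x)$ for suitable $m,N$: the 3rd-order coefficients $(\Lo'\po-\po'\Lo)/\po^2$ and $\Lo\po^3$ are $O(\sqrt\omega)$; the 2nd-order coefficients $\omega\po'/\po$, $\omega(\Lo'\po-\po'\Lo)/\po^2$, $\po^3\Lo'$ are $O(\omega\rho^2)$; and the 0th-order expression is $O(\oo^2\rho^2)$ plus the term $2\omega^2$ which is a constant of order $\oo^2$. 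For each monomial I then expand by the Leibniz rule and apply the appropriate line of Lemma~\ref{LE:6}: for instance, in the 3rd-order piece $2\px^2(c\,\px g)$ with $c=(\Lo'\po-\po'\Lo)/\po^2$, the worst contribution is
\[
\|\eta_A\Xe^2(c\,\px^3 g)\|\leq\|c\|_{L^\infty}\|\Xe[\eta_A\Xe\px^2(\px g)]\|\leq C\oo^{1/2}\alpha^{-1}\|\eta_A\px g\|,
\]
which realizes the claimed first bound; the contributions of $c'\px^2 g$ and $c''\px g$ and of the lower-order monomials are controlled analogously and are strictly better. The zeroth-order constant-in-$x$ part $2\omega^2$ gives exactly the $\oo^2\|\eta_A g\|$ contribution.

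For the second estimate $\|\eta_A\px\Xe^2\QM g\|$ I follow verbatim the end of the proof of Lemma~\ref{LE:7}: each monomial gains one extra half-power of $\alpha^{-1}$ when a further $\px$ is placed on $\Xe^2$, yielding the leading $\alpha^{-3/2}\oo^{1/2}$ bound from the same 3rd-order term; and for the constant-coefficient 0th-order part $2\omega^2 g$, the only way to produce a $\px$ inside is via the product rule on the full 0th-order expression, which extracts a factor $\po'$ or $\Lo'$ (bounded by $C\oo\rho^2$), leading to the $\oo^{5/2}\|\rho^2 g\|$ contribution.

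The main obstacle is purely bookkeeping: the operator $\QP$ in particular has many monomials, and one must verify systematically that every coefficient obeys a bound of the form $C\oo^{m/2}\rho^N$ consistent with the above pattern, in particular that $\Lo/\po$ and its derivatives are uniformly controlled by powers of $\sqrt\omega$ on $(0,\tfrac18]$ (which follows from the explicit formula for $\po$ and the formula $\Lo=\omega\partial_\omega\po$). Once these coefficient bounds are in place, the application of Lemma~\ref{LE:6} is mechanical and reproduces the proof of Lemma~\ref{LE:7} line by line.
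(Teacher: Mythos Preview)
Your approach is correct and is precisely what the paper does: its entire proof of Lemma~\ref{LE:8} is the single sentence ``The proof is similar to the one of Lemma~\ref{LE:7}.'' One small correction to your bookkeeping: not all of the first-order coefficients you list carry a $\rho^2$ decay factor (for instance $\omega\po'/\po\to\mp\omega^{3/2}$ as $x\to\pm\infty$), but this is harmless since their uniform $O(\omega^{3/2})$ bound already places them below the leading $C\alpha^{-1}\oo^{1/2}\|\eta_A\px g\|$ term, exactly as the analogous first-order coefficient $-2\omega\po'/\po$ is handled in the proof of Lemma~\ref{LE:7}.
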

\begin{proof}
The proof is similar to the one of Lemma~\ref{LE:7}.
\end{proof}

\subsection{Second virial estimate}
From the equation~\eqref{eq:u} of~$u$ and the identity of Lemma~\ref{LE:2},~$v$ satisfies
\begin{equation}\label{eq:v}
\begin{cases}
\pt v_1 = \MM v_2 + \Ye v_2 +\Xe^2 n_2 - \Xe^2 r_2 \\
\pt v_2 = -\MP v_1 + \frac 13 \Ye v_1 -\Xe^2 n_1 + \Xe^2 r_1 
\end{cases}
\end{equation}
where we have used~$S^2 \theta_1=S^2 \LP \theta_2=0$ 
(see \S \ref{S:2.2} and \eqref{eq:S2})
and the notation
\begin{gather*}
n_1 = S^2 \LP m_2+ \frac{\dot \omega}{\omega}\QP u_1,
\quad n_2 = -\MM S^2 m_1+ \frac{\dot \omega}{\omega}\QM u_2,\\
r_1 = S^2 \LP q_2, \quad r_2 = -\MM S^2 q_1,
\end{gather*}
and
\begin{equation*}
Y_\alpha=\Xe^2 \cdot \po^4 \cdot \Xe^{-2}-\po^4.
\end{equation*}

\begin{remark}\label{RK:3}
The identity used for the transformed problem is different from the one for the wave-type equations in \cite{KMM,KMMV} and for the Schr\"odinger equation with a real potential in \cite{CM2}. However, the underlying idea is similar: the system for $(v_1,v_2)$ has the same structure as the original system in $(u_1,u_2)$, but with more favorable operators $\MP$, $\MM$.
\end{remark}

The next proposition provides the key estimate on the function $v$.
Thanks to the special structure of the operators $\MP$ and $\MM$, a virial estimate on $v$
controls a weighted $L^2$ norm of $v$ by higher order terms in $u$.

\begin{proposition}\label{PR:3}
Assume that $\oo\in (0,\frac 18]$.
There exists~$C>0$ such that, for~$B>0$ large enough, $\alpha>0$ and
$\varepsilon>0$ small enough, for any~$T>0$,
\[
\oo^2 \int_0^T \|\rho v \|^2\ud t
\leq C\varepsilon + \frac CA \int_0^T\left( \|\eta_A\px u\|^2 + \frac{\oo^3}{A^2}\|\eta_A u\|^2
+ \oo\|\rho^2 u\|^2\right)\uds t.
\]
\end{proposition}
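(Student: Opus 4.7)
The plan is to mirror the proof of Proposition~\ref{PR:2} but for the transformed function~$v$, at the spatial scale~$B$ and using the system~\eqref{eq:v}. Set
\[
\cJ(t) = \int v_1\bigl(2\Psi_{A,B}\px v_2 + \Psi_{A,B}' v_2\bigr)\ud x,
\]
whose integrand lies in $L^1(\R)$ by Lemma~\ref{LE:9} and the $H^1(\R)$ bound on~$u$. Differentiating in~$t$ using~\eqref{eq:v} and the antisymmetry
$\int v_1(2\Psi_{A,B}\px v_2 + \Psi_{A,B}' v_2) = -\int v_2(2\Psi_{A,B}\px v_1 + \Psi_{A,B}' v_1)$, I decompose
$\dot{\cJ} = \cJ_{\mathrm{main}} + \cJ_Y + \cJ_n + \cJ_r$, where the four pieces collect respectively the contributions of $\MM v_2$ and $\MP v_1$, of $\Ye v_k$, of $\Xe^2 n_k$, and of $\Xe^2 r_k$.

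The standard Morawetz-type virial computation, applied to $\MP = -\px^2 + \omega - \tfrac13\po^4$ and $\MM = -\px^2 + \omega + \po^4$, yields
\[
\cJ_{\mathrm{main}} = 2\int \Psi_{A,B}'|\px v|^2 - \tfrac12\int \Psi_{A,B}'''|v|^2 + 4\int\bigl(-\po^3\po'\Psi_{A,B}\bigr)\Bigl(v_2^2 - \tfrac13 v_1^2\Bigr)\ud x + \mathcal{R}_{\chi_A},
\]
where $\mathcal{R}_{\chi_A}$ gathers the terms in which a derivative hits $\chi_A^2$ instead of $\Phi_B$. Since $-x\po^3\po'\geq 0$, Lemma~\ref{LE:1} gives $-\po^3\po'\Psi_{A,B}\geq c\oo^2\rho^4$ on a large region independent of $\oo$ and~$A$. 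The key algebraic step, and the main obstacle, is to turn the sign-indefinite quadratic form $v_2^2 - \tfrac13 v_1^2$ into a one-sided coercive quantity of the form $c|v|^2$. For this, I would exploit the factorization $\MP = SS^*\geq 0$ from~\S\ref{s:2.3} to dominate a weighted $L^2$ norm of $v_1$ by $\int \Psi_{A,B}'|\px v_1|^2$, and then absorb $\tfrac13\int(-\po^3\po'\Psi_{A,B})v_1^2$ into the first principal term; the restriction $\oo\leq \tfrac18$ ensures that the coefficient $\tfrac13$ of the non-repulsive piece of the $\MP$ potential is small enough relative to the spectral gap for this absorption to succeed, eventually giving $\cJ_{\mathrm{main}} \geq c\oo^2\|\rho v\|^2 + \mathcal{R}_{\chi_A}$.

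The three remainder terms are controlled as follows. For $\cJ_Y$, the identity $\Ye = \Xe^2[\po^4,\Xe^{-2}]$ together with $\Xe^{-2}-1 = -2\alpha\px^2 + \alpha^2\px^4$ extracts a prefactor $\alpha$; Lemmas~\ref{LE:6} and~\ref{LE:9} then provide a bound $C\alpha$ times the coercive quantity, absorbable for $\alpha$ fixed small. For $\cJ_n$, split $n_1 = S^2\LP m_2 + \tfrac{\dot\omega}{\omega}\QP u_1$ (and similarly for $n_2$), apply Lemmas~\ref{LE:7} and~\ref{LE:8}, and use the modulation estimate~\eqref{eq:modulation} to turn the moduli $|\dot\beta|,|\dot\omega|/\omega,|\dot\sigma-2\beta|,|\dot\gamma-\omega-\beta^2|$ into $\sqrt\oo\|\rho^2 u\|^2$; the net contribution is absorbable into terms of the type $\|\eta_A\px u\|^2$ and $\oo\|\rho^2 u\|^2$. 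For $\cJ_r$, the quadratic nature of $q_1,q_2$ in $u$ and the pointwise bound $\|u\|_{L^\infty}\leq C\varepsilon$ from~\eqref{eq:small}, combined with Lemma~\ref{LE:7}, produce contributions of order $C\varepsilon(\|\eta_A\px u\|^2+\oo\|\rho^2 u\|^2)$, absorbable for $\varepsilon$ small. Finally, $\mathcal{R}_{\chi_A}$ is supported on $|x|\sim A\oo^{-3/2}$ where $|\chi_A'|+|\chi_A''|\leq CA^{-1}\oo^{3/2}\eta_A$, producing the genuine $A^{-1}$ prefactor in front of the Proposition~\ref{PR:2}-type quantities.

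Combining everything, fixing first $B$ large and $\alpha$ small independently of $\oo,\varepsilon,A$, then $\varepsilon$ small enough, one obtains
\[
\dot\cJ \geq c\oo^2\|\rho v\|^2 - \frac{C}{A}\Bigl(\|\eta_A\px u\|^2 + \frac{\oo^3}{A^2}\|\eta_A u\|^2 + \oo\|\rho^2 u\|^2\Bigr).
\]
The uniform bound $|\cJ(T)|\leq C\varepsilon$ follows from $\|\Psi_{A,B}\|_{L^\infty}\leq CA\oo^{-3/2}$, Lemma~\ref{LE:9}, and $\|u\|_{H^1(\R)}\leq \varepsilon$ with $\varepsilon$ taken small enough depending on~$A,\alpha,\oo$; integrating on $[0,T]$ then produces the stated estimate. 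The hardest step is the coercivity of $\cJ_{\mathrm{main}}$ above, which is exactly where the special cubic-quintic structure (the factor $\tfrac13$ between the potentials of $\MP$ and $\MM$) combined with the threshold $\oo\leq \tfrac18$ enters in an essential way.
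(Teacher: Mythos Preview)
Your overall architecture is right and matches the paper: a localized virial functional $\cJ$ at scale $B$ for the transformed variable $v$, with error terms handled via Lemmas~\ref{LE:6}--\ref{LE:9} and the modulation estimate~\eqref{eq:modulation}. The treatment you sketch for $\cJ_Y$, $\cJ_n$, $\cJ_r$ and the $\chi_A$-localization remainder is essentially what the paper does.

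The gap is in the coercivity of $\cJ_{\mathrm{main}}$. After the virial computation the main part reads, with $z=\chi_A\zeta_B v$ and $P_B=-\tfrac13\,\Phi_B\zeta_B^{-2}(\poo^4)'\ge 0$,
\[
\cJ_{\mathrm{main}}=\int\bigl(2(\px z_1)^2 - P_B z_1^2\bigr)+\int\bigl(2(\px z_2)^2 + 3P_B z_2^2\bigr)+\text{errors},
\]
and you propose to absorb the bad term $-\int P_B z_1^2$ into $2\int(\px z_1)^2$ using $\MP=SS^*\ge0$. This cannot work as stated. The positivity $\MP\ge 0$ is the inequality $\int|\px g|^2+\omega\int g^2\ge \tfrac13\int\po^4 g^2$; the $\omega\int g^2$ contribution is essential there, but in a virial identity the constant $\omega$ commutes with $2\Psi\px+\Psi'$ and produces nothing. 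What you actually need is $2\int(\px z_1)^2\ge \int P_B z_1^2$ for all $z_1$, i.e.\ nonnegativity of the Schr\"odinger operator $-2\px^2-P_B$. In one space dimension any nontrivial nonnegative well creates a bound state, so $-2\px^2-P_B$ has a negative eigenvalue and the quadratic form $2\int(\px z_1)^2-\int P_B z_1^2$ is genuinely indefinite. No orthogonality on $v_1$ is available to cure this.

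The paper closes this gap by introducing a second, auxiliary functional
\[
\cK=-\int z_1 z_2\,R_B,\qquad -\tfrac12 R_B''+\oo R_B=\tfrac32 P_B,
\]
whose time derivative, via the $\omega$-part of $\MP,\MM$ acting on $z_1,z_2$, manufactures exactly the correction $\tfrac32\int(z_1^2-z_2^2)P_B$ that flips the sign:
\[
\dot\cJ+\dot\cK=\int\Bigl(2(\px z_1)^2+\tfrac12 P_B z_1^2\Bigr)+\int\Bigl(2(\px z_2)^2+\tfrac32 P_B z_2^2\Bigr)+\text{errors}.
\]
The new error $K_2$ carries a prefactor $\|R_B\|_{L^\infty}$, and the restriction $\oo\le\tfrac18$ enters precisely here: it forces $\|P_B\|_{L^\infty}\le \tfrac15\oo$ and hence $\|R_B\|_{L^\infty}\le \tfrac{7}{18}<1$ (Lemma~\ref{LE:3}), after which Lemma~\ref{LE:4} bounds $\int\poo^4 h^2$ by $\tfrac{19}{6}\int P_B h^2+3\int(h')^2$ and the arithmetic closes. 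So the ``special cubic-quintic structure'' and the threshold $\oo\le\tfrac18$ do enter essentially, but through the size of $R_B$ rather than through a direct use of $\MP\ge 0$.
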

\begin{proof}
First, we introduce
\[
\cJ = \int v_1 \left( 2 \Psi_{A,B} \px v_2 + \Psi_{A,B}' v_2\right)\quad \mbox{and}\quad
z=\chi_A \zeta_B v.
\]
By the equation~\eqref{eq:v} of $v$ and direct computations (see~\cite[Proof of Proposition~2, \S 4.3]{KMM}),
we compute
\[
\dot \cJ = 
\int \left( 2 ( \px z_1)^2 - P_B z_1^2\right)
+ \int \left( 2 ( \px z_2)^2 + 3 P_B z_2^2\right) + \sum_{j=1}^5 J_j,
\]
where
\[
P_B = - \frac 13 \frac{\Phi_B}{\zeta_B^2} (\poo^4)' ,
\]
and
\begin{align*}
J_1 &= \sum_{k=1}^2 \int (\ln \zeta_B)'' z_k^2,\\
J_{2}
& = - \sum_{k=1}^2 \int \left( \frac 12 (\chi_A^2)'(\zeta_B^2)' + \left(3(\chi_A')^2+\chi_A''\chi_A\right)\zeta_B^2 + \frac 12 (\chi_A^2)''' \Phi_B \right) v_k^2 \\ 
& \quad + 2 \sum_{k=1}^2 
\int (\chi_A^2)' \Phi_B( \px v_k)^2,\\
J_3 & = -\frac 13 \int \left(2 \Psi_{A,B} \px v_1+ \Psi_{A,B}' v_1\right) \Ye v_1
+ \int \left(2 \Psi_{A,B} \px v_2+ \Psi_{A,B}' v_2\right) \Ye v_2,\\
J_4 & = \sum_{k=1}^2 \int \left(2 \Psi_{A,B} \px v_k+ \Psi_{A,B}' v_k\right)(\Xe^2 n_k- \Xe^2 r_k)\\
J_5 & = - \frac 13 \int \frac{\Phi_B}{\zeta_B^2} (\poo^4-\po^4)'( z_1^2 - 3z_2^2).
\end{align*}

Second, we set
\[
\cK = - \int z_1 z_2 R_B,
\]
where the function~$R_B$ is the unique (time-independent) bounded solution of
\[
- \frac 12 R_B'' + \oo R_B = \frac 32 P_B\quad \mbox{on~$\R$.}
\]
Using the equation~\eqref{eq:v} of $v$, integrating by parts and using the equation of~$R_B$, we obtain
\[
\dot K = \frac32\int (z_1^2-z_2^2) P_B + \sum_{j=1}^5 K_j,
\]
where
\begin{align*}
K_1 &= \sum_{k=1}^2 (-1)^k \int v_k^2 \left( (\chi_A \zeta_B)'\chi_A \zeta_B R_B' + \left((\chi_A\zeta_B)'\right)^2 R_B \right)\\
K_2 &= \int \left(( \px z_1)^2 - ( \px z_2)^2\right) R_B
- \int \left( \frac 13 z_1^2 + z_2^2 \right) \po^4 R_B\\
K_3 &= - \int \left(\frac 13 (\Ye v_1) v_1 + (\Ye v_2) v_2 \right) \chi_A^2 \zeta_B^2 R_B,\\
K_4 &= \sum_{k=1}^2 (-1)^{k-1}\int (\Xe^2 n_k-\Xe^2 r_k) v_k \chi_A^2 \zeta_B^2 R_B,\\
K_5 &= (\omega-\oo) \int (z_1^2 - z_2^2) R_B.
\end{align*}
Therefore,
\begin{align}
\dot \cJ +\dot \cK & = 
\int \left(2 ( \px z_1)^2 + \frac 12 P_B z_1^2\right)
+ \int \left(2 ( \px z_2)^2 + \frac 32 P_B z_2^2\right)\label{111}\\
&\quad +\sum_{j=1}^5 J_j+ \sum_{j=1}^5 K_j.\nonumber
\end{align}
Since $P_B$ is nonnegative and not identically zero on $\R$, the functional $\cJ+\cK$ will allow us to estimate
the function $z$ provided that the error terms~$J_j$ and~$K_j$ are controlled. 
Comparing the expressions of $\dot\cJ$ and $\dot\cJ+\dot\cK$, we observe that the functional $\cK$ was used to exchange 
$\frac 32 \int P_B z_2^2$ by $\frac 32 \int P_B z_1^2$ at the cost of additional error terms.

To estimate the error terms, we need several technical estimates.

\begin{lemma}\label{LE:3}
For any~$\oo\in (0,\frac 18]$,
\[
\|P_B\|_{L^\infty} \leq \frac 15\oo ,\quad 
\|R_B\|_{L^\infty} \leq \frac 7{18}.
\]
Moreover,
\[
0\leq P_B(x) \leq C\oo \poo^2,\quad 0\leq R_B(x) \leq C\poo^2
\quad 
|R_B'(x)|\leq C \sqrt{\oo} \poo^2 \quad \hbox{on~$\R$}.
\]
\end{lemma}
\begin{proof}
Using~$|\Phi_B|\leq |x|$ and~$\zeta_B(x)\geq \ee^{-\frac{\oo\sqrt{\oo}}B |x|} \geq \ee^{-\frac{\sqrt{\oo}}{8B} |x|}$, we have
\begin{equation}\label{eq:PB}
0\leq P_B(x) \leq \frac 13 |x| \ee^{\frac{\sqrt{\oo}}{4B} |x|} |(\poo^4)'|.
\end{equation}
Note that
\[
\poo^4 = \frac {16\oo^2}{\left( 1+a_{\oo} \cosh(2 \sqrt{\oo} x) \right)^2},\quad
(\poo^4)' = - a_{\oo} \oo^{-\frac 12} \poo^6 \sinh(2\sqrt{\oo} x).
\]
Moreover,~$\oo\leq \frac 18$ implies~$a_{\oo}\geq \frac 1{\sqrt3}$.
By the inequality \eqref{eq:A1} proved in Appendix~A, we obtain $0\leq P_B(x) \leq \frac {32}9\oo^2 a_{\oo}$.
For any~$\oo \in (0,\frac 18]$, we have~$\frac {32}9 \oo a_{\oo}\leq\frac 4{9\sqrt{3}}\leq \frac 7{27}$ and thus
$0\leq P_B(x) \leq \frac 7{27}\oo$. 
From \eqref{eq:PB} and using first $B\geq 1$, $\frac1{\sqrt 3}\leq a_\omega\leq 1$ and then
$|\sinh y|\leq \cosh y$ and $|y|e^{\frac{|y|}{8}}\leq C \cosh(y)$, we have
\[
P_B(y)\leq \frac{16}3\oo\po^2\frac{\sqrt{\oo} |x| e^{\frac{\sqrt{\oo}}{4}|x|} |\sinh(2\sqrt{\oo} x)|}
{(1+\frac 1{\sqrt 3}\cosh(2\sqrt{\oo} x))^2}
\leq C \oo \poo^2.
\]
Next, we recall the explicit expression of $R_B$
\begin{equation*}
R_B =\frac 32 \frac 1{\sqrt{2\oo}} \left( \int_{-\infty}^x \ee^{\sqrt{2\oo} (y-x)} P_B(y) \ud y
+\int_x^\infty \ee^{\sqrt{2\oo} (x-y)} P_B(y) \ud y\right)
\end{equation*}
and so~$\|R_B\|_{L^\infty} \leq \frac 32\oo^{-1} \|P_B\|_{L^\infty}
\leq \frac 7{18}$.
Inserting $0\leq P_B(x) \leq C\oo^2 \ee^{-2\sqrt{\oo} |x|}$ into the above expression of $R_B$, we also find
$0\leq R_B(x)\leq C \oo \ee^{-2\sqrt{\oo} |x|}$ and
$|R_B'(x)|\leq C \oo^\frac 32 \ee^{-2\sqrt{\oo} |x|}$.
\end{proof}

\begin{lemma}\label{LE:4}
For any~$\oo\in (0,\frac 18]$,
for any function~$h\in H^1(\R)$,
\[
\int \poo^4 h^2 \leq \frac {19}{6} \int P_B h^2 + 3\int (h')^2.
\]
\end{lemma}
\begin{proof}
By the definition of~$P_B$ and integration by parts, we have
\[
3 \int P_B h^2 = - \int \frac{\Phi_B}{\zeta_B^2} (\poo^4)' h^2
=  \int \left(\frac{\Phi_B}{\zeta_B^2}\right)' \poo^4 h^2
+2 \int \frac{\Phi_B}{\zeta_B^2} \poo^4 h h'.
\]
Using
\[
\int \left(\frac{\Phi_B}{\zeta_B^2}\right)' \poo^4 h^2
=\int \poo^4 h^2 - 2 \int \frac{\Phi_B\zeta_B'}{\zeta_B^3} \poo^4 h^2
\geq \int \poo^4 h^2
\]
and
\[
2 \left|\int \frac{\Phi_B}{\zeta_B^2} \poo^4 h h'\right|
\leq \frac 13 \int \frac{\Phi_B^2}{\zeta_B^4} \poo^8 h^2 + 3\int (h')^2,
\]
we obtain
\[
\int \poo^4 h^2 \leq \left( 3+ \frac 13 \left\|\frac{\Phi_B^2\poo^8 }{\zeta_B^4P_B} \right\|_{L^\infty} \right)\int P_B h^2 + 3 \int (h')^2.
\]
We claim the following estimate 
\[
\left\|\frac{\Phi_B^2\poo^8 }{\zeta_B^4P_B}\right\|_{L^\infty}
\leq 4 \oo \leq \frac 12
\]
which is sufficient to complete the proof. To prove the above estimate, we recall that 
$a_{\oo}\geq \frac 1{\sqrt3}$, $|\Phi_B|\leq |x|$, $\zeta_B\geq e^{-\frac{\sqrt{\oo} x}{8B}}$, so that
\begin{align*}
\left| \frac{\Phi_B^2\poo^8 }{\zeta_B^4P_B} \right|
= \frac{3 |\Phi_B| \poo^8}{\zeta_B^2 |(\poo^4)'|}
&=\frac{3 \sqrt{\oo}|\Phi_B| \poo^2}{\zeta_B^2 a_{\oo} |\sqrt{\oo}x\sinh(2\sqrt{\oo}x)|}\\
&\leq {6\oo} \biggl\| \frac{y \ee^{\frac1{4B} |y|}}{\sinh(y)(1+ \frac 1{\sqrt3} \cosh(y))}\biggr\|_{L^\infty}
\leq 4 \oo
\end{align*}
using in the last step the inequality \eqref{eq:A3} proved in Appendix A.
\end{proof}

\begin{lemma}\label{LE:5}
There exists~$c>0$ such that, for any~$\oo\in (0,\frac 18]$, for any~$x\in \R$,
\[
P_B(x) \geq c \oo^2 \ONE_{[1,2]}(\sqrt{\oo} |x|).
\]
Moreover, there exists $C>0$ such that for any $h\in H^1(\R)$,
\begin{equation}\label{eq:std}
\oo^2 \int \rho h^2 \leq C\oo \int (h')^2 + C\int P_B h^2.
\end{equation}
\end{lemma}
\begin{proof}
The function $\zeta:[0,\infty)\to [0,1]$ being non increasing, 
for $x\geq 0$, $\Phi_B(x) = \int_0^x \zeta_B^2 \geq |x| \zeta_B^2(x)$.
By parity $|\Phi_B(x)|\geq |x| \eta_B^2(x)$ on $\R$. Thus,
\begin{equation*}
P_B = - \frac 13 \frac{\Phi_B}{\zeta_B^2} (\poo^4)'
\geq c \oo^2 \frac{\sqrt{\oo} |x| |\sinh(2\sqrt{\oo} x)|}{(1+\cosh(2\sqrt{\oo} x))^3}\geq c \oo^2 \ONE_{[1,2]}(\sqrt{\oo} |x|),
\end{equation*}
where~$c$ denotes positive constants.
The inequality~\eqref{eq:std} then follows from standard arguments, see \emph{e.g.}~\cite[Lemma 4]{KMM}.
\end{proof}

Last, we prove an estimate on $v$ in terms of $z$, plus an error term in $u$.
\begin{lemma}\label{LE:10} There exists~$C>0$ such that
\begin{align*}
\int \rho^2 (|\px v|^2 + \omega_0^2 |v|^2)
&\leq C \int \left(|\px z|^2 + P_B |z|^2\right)\\
&\quad +\frac CA \left( \alpha^{-4}\|\eta_A\px u\|^2 + \frac{\oo^3}{A^2}\|\eta_A u\|^2\right).
\end{align*}
\end{lemma}
\begin{proof}
We claim
\[
\int_{\oo^{\frac 32}|x|\leq A} \rho^2 (|\px v|^2 + \omega_0^2 |v|^2)
\leq C \int \left(|\px z|^2 + P_B |z|^2\right).
\]
By definition, for~$\oo^\frac 32|x|\leq A$, one has~$z = \zeta_B v$ and so (for $B$ large)
\[
\int_{\oo^\frac 32|x|\leq A} \rho^2 |v|^2 \leq C\int_{\oo^\frac 32|x|\leq A}\rho \, \zeta_B^2 |v|^2 
\leq C\int_{\oo^\frac 32|x|\leq A} \rho |z|^2 .
\]
For $\oo^\frac 32|x|\leq A$, using $\px z = \zeta_B' v + \zeta_B \px v$
and $|\zeta_B'|\leq C \oo^{\frac 32} B^{-1} \zeta_B$, we also have
\begin{align*}
\rho^2 |\px v|^2 
\leq C \rho \zeta_B^2 |\px v|^2
&\leq C \rho |\px z|^2 + C\oo^3 B^{-2} \rho \zeta_B^2 |v|^2 \\
&\leq C \rho |\px z|^2 + C\oo^3 B^{-2} \rho |z|^2
\end{align*}
and so
\[
\int_{\oo^\frac 32|x|\leq A} \rho^2 |\px v|^2 \leq C\int |\px z|^2+ C\frac{\oo^3}{B^2}\int \rho |z|^2.
\]
We complete the proof of the claim using \eqref{eq:std}.

Now, using Lemma~\ref{LE:9},
\begin{align*}
\int_{\oo^{\frac 32}|x|\geq A} \rho^2 (|\px v|^2 +|v|^2)
&\leq C\ee^{-\frac A{10 \oo}} \left(\|\eta_A \px v\|^2 +\|\eta_A v\|^2\right)\\
&\leq C\frac{\oo^3}{A^3} \left(\alpha^{-4}\|\eta_A \px u\|^2 +\|\eta_A u\|^2\right),
\end{align*}
which implies the desired estimate.
\end{proof}

\emph{Estimate of~$J_1$.}
By~\eqref{:zeta} and then Lemma~\ref{LE:5}, we have
\[
\left|(\ln \zeta_B)''\right|
\leq \frac{C \oo^2}B \ONE_{[1,2]}(\sqrt{\oo} |x|)
\leq \frac CB P_B .
\]
Thus, for~$B$ large enough (independent of~$\alpha$, $\oo$,~$A$ and~$\varepsilon$), we have
\[
|J_1| \leq \frac 1{100} \int P_B |z|^2.
\]

\emph{Estimate of~$K_1$.}
Using $|\chi_A'|\leq C \oo^{\frac 32}A^{-1}\leq C \oo^{\frac 32}B^{-1}$, $|\zeta_B'|\leq C \oo^{\frac 32}B^{-1}\zeta_B$,
and Lemma~\ref{LE:3}, we estimate
\[
\left|(\chi_A \zeta_B)'\chi_A \zeta_B R_B' + \left((\chi_A\zeta_B)'\right)^2 R_B \right|
\leq \frac{\oo^3}B \rho^2.
\]
Thus,
\begin{equation*}
|K_1| 
\leq \frac{C\oo^3}B \int \rho^2 |v|^2,
\end{equation*}
and using Lemma~\ref{LE:10}, taking~$B$ large enough (independent of~$\alpha$, $\oo$, $A$ and~$\varepsilon$),
\[
|K_1|\leq \frac 1{100} \int \left( |\px z|^2 + P_B |z|^2\right)
+\frac CA \left( \alpha^{-4}\|\eta_A\px u\|^2 + \frac{\oo^3}{A^2}\|\eta_A u\|^2\right).
\]
From now on,~$B$ is fixed so that the above estimates on $J_1$ and $K_1$ hold.

\emph{Estimate of~$J_2$.}
First we recall some bounds on the functions involved in the definition of~$J_2$.
We have
\[
|\chi_A'|\leq \frac {C\oo^\frac 32}{A},\quad |\chi_A''|\leq \frac {C\oo^3}{A^2},
\quad |\chi_A'''|\leq \frac {C\oo^\frac 92}{A^3}
\]
and
\[
\chi_A'(x)=\chi_A''(x)=\chi_A'''(x)=0 \quad \hbox{if~$\oo^\frac 32|x|<A$ or if~$\oo^\frac 32|x|>2A$.}
\]
Moreover, 
\[
|\zeta_B(x)|\leq C\ee^{-\frac AB},\quad |\zeta_B'(x)|\leq \frac {C\oo^\frac 32}B\ee^{-\frac AB} \quad \hbox{for~$\oo^\frac 32|x|>A$.}
\]
Thus,
\[
|(\chi_A^2)'(\zeta_B^2)'|\leq \frac{C\oo^3}{AB}\ee^{-\frac AB} \eta_A^2,\quad
((\chi_A')^2+|\chi_A''\chi_A|)\zeta_B^2\leq \frac{C\oo^3}{A^2}\ee^{-\frac AB} \eta_A^2.
\]
Using also~$|\Phi_B|\leq CB\oo^{-\frac 32}$, we obtain
\[
|(\chi_A^2)'\Phi_B|\leq \frac{CB}{A}\eta_A^2,\quad
|(\chi_A^2)'''\Phi_B|\leq \frac{CB}{A^3}\oo^3\eta_A^2.
\]
Therefore (recall that~$B$ has been fixed)
\[
|J_2|\leq \frac CA \left( \|\eta_A \px v\|^2
+ \frac{\oo^3}{A^2}\|\eta_A v\|^2\right).
\]
Using Lemma~\ref{LE:9}, it follows that
\[
|J_2|\leq \frac CA \left(\alpha^{-4} \|\eta_A \px u\|^2+\frac{\oo^3}{A^2}\|\eta_A u\|^2
+ \oo\|\rho^2 u\|^2\right).
\]

\emph{Estimate of~$K_2$.}
Using Lemmas~\ref{LE:3},~\ref{LE:4} and~\eqref{eq:std}, we have
\begin{align*}
|K_2| & \leq \| R_B \|_{L^\infty} \left( \int |\px z|^2 
+ \frac 13 \int z_1^2 \poo^4 + \int z_2^2 \poo^4 +\int |z|^2 |\poo^4-\po^4|\right)\\
& \leq (1+C\varepsilon)\int \left( \frac 79 ( \px z_1)^2 + \frac {133}{324} P_B z_1^2
+ \frac {14}9 ( \px z_2)^2 + \frac {133}{108} P_B z_2^2\right)\\
& \leq \frac 9{10} \int \left(2 ( \px z_1)^2 + \frac 12 P_B z_1^2 
+2 ( \px z_2)^2 + \frac 32 P_B z_2^2\right)
\end{align*}
for $\varepsilon$ small enough.

\emph{Estimate of~$J_3$ and $K_3$.}
By the Cauchy-Schwarz inequality, and the bounds
\[
|\Psi_{A,B}|\leq C B \oo^{-\frac 32}\leq C \oo^{-\frac 32},\quad
|\Psi_{A,B}'|\leq C,
\]
we have for $k=1,2$
\[
\left| \int (2\Psi_{A,B} \px v_k + \Psi_{A,B}' v_k) \Ye v_k\right|
\leq C \Big( \oo^{-\frac 32} \|\rho \px v_k\|+\|\rho v_k\| \Big)\|\rho^{-1} \Ye v_k\|. 
\]
We rewrite~$\Ye$ as
\begin{align*}
\Ye 
&= 2 \alpha \Xe^2 \big[ 2 \px \cdot (\po^4)'- (\po^4)''\big]\\
& \quad + \alpha^2 \Xe^2\big[ - 4 \px^3\cdot (\po^4)' + 6 \px^2\cdot (\po^4)''
-4 \px\cdot(\po^4)'''- 2 (\po^4)^{(4)}\big].
\end{align*}
Using Lemma~\ref{LE:1} and Lemma~\ref{LE:6}, we obtain
\[
\|\rho^{-1} \Ye v_k\| \leq C\alpha^{\frac 12} \oo^{\frac 52}\|\rho v_k\|.
\]
Thus,
\[
|J_3|\leq C \alpha^\frac12 \oo \Big(\|\rho \px v\|+\oo^{\frac 32}\|\rho v\| \Big)\|\rho v\|.
\]
Therefore, using Lemma~\ref{LE:10},
\begin{equation*}
|J_3|
\leq C \alpha^\frac12 \int \left(| \px z|^2 + P_B |z|^2\right)
+ \frac C A \left(\alpha^{-4} \|\eta_A \px u\|^2
+\frac{\oo^3}{A^2} \|\eta_A u\|^2\right).
\end{equation*}
The estimate of~$K_3$ is similar and easier.

We fix $\alpha>0$ (independent of $\oo$, $A$ and $\varepsilon$) so that
\[
|J_3|+|K_3|
\leq \frac 1{100} \int \left(| \px z|^2 + P_B |z|^2\right)
+ \frac C A \left(\|\eta_A \px u\|^2
+\frac{\oo^3}{A^2} \|\eta_A u\|^2\right).
\]

\emph{Estimate of~$J_4$ and $K_4$.}
Using Lemma~\ref{LE:7} and Lemma~\ref{LE:8}, we have for $k=1,2$,
\[
 \|\eta_A \Xe^2n_k\|\leq 
C \left(\|\eta_A \px m_k\|+\|\eta_A m_k\|\right)
+C |\dot \omega| \oo^{-\frac 12}\left(\|\eta_A \px u\|+\|\eta_A u\|\right).
\]
By the expression of~$m_k$, $|x\eta_A|\leq C A$ and~\eqref{eq:modulation}, \eqref{eq:small},
we obtain
\[
\|\eta_A \px m_k\|+\|\eta_A m_k\|
\leq C A \varepsilon \|\rho^2 u\|^2 \leq C A \varepsilon^2 \|\eta_A u\|.
\]
Thus, by the Cauchy-Schwarz inequality, Lemma~\ref{LE:9} and \eqref{eq:modulation},
\begin{align*}
\left| \int \left( 2\Psi_{A,B} \px v_k + \Psi_{A,B}'v_k\right)(\Xe^2 n_k)\right|
& \leq C \|\eta_A\Xe^2 n_k\| \left(\|\eta_A \px v \|+\|\eta_A v \|\right)
\\
& \leq C A \varepsilon^2 \left( \|\eta_A \px u\| +\|\eta_A u\|\right)^2.
\end{align*}

Using Lemmas~\ref{LE:7}, we have
\[
\sum_{k=1,2}\|\eta_A \Xe^2 r_k\|\leq 
C \sum_{k=1,2}\left(\|\eta_A \px q_k\|+\|\eta_A q_k\|\right)
\]
Moreover, by \eqref{eq:small},
\[
|q_1|+|q_2|\leq C |u|^2\leq C \varepsilon |u|
\]
and so
\[
\|\eta_A \px q_k\|+\|\eta_A q_k\|\leq C \varepsilon \left(\|\eta_A \px u\| +\|\eta_A u\|\right)
\]
Thus, as before,
\begin{equation*}
\left| \int \left( 2\Psi_{A,B} \px v_k + \Psi_{A,B}'v_k\right)(\Xe^2 r_k)\right|
\leq C \varepsilon \left(\|\eta_A \px u\| +\|\eta_A u\|\right)^2.
\end{equation*}
Thus, for $\varepsilon$ small (depending on $A$ and $\oo$),
\[
|J_4| \leq \frac CA \left(\|\eta_A\px u\|^2 + \frac{\oo^3}{A^2}\|\eta_A u\|^2\right)
\]
The estimate of~$K_4$ is similar to the one of~$J_4$.

\emph{Estimate of~$J_5$ and $K_5$.} By $|\Phi_B|\leq |x|$, Lemma~\ref{LE:3} and~\eqref{eq:small}, we have
\[
|J_5|+|K_5|\leq C \varepsilon\oo\|\rho^2 z\|^2.
\]
For $\varepsilon$ small, these terms are controlled using~\eqref{eq:std},
\[
|J_5|+|K_5| \leq \frac 1{100} \int \left( |\px z|^2+P_B |z|^2\right).
\]

\emph{Conclusion of the proof of Proposition~\ref{PR:3}.}
Combining the identity~\eqref{111} with the above estimates on the error terms, we have
\begin{align}
\dot{\cJ}+\dot{\cK} 
&\geq \frac 1{100} \int \left(|\px z|^2 + P_B |z|^2\right)\label{222}\\
&\quad - \frac CA \left( \|\eta_A\px u\|^2 + \frac{\oo^3}{A^2}\|\eta_A u\|^2+ \oo\|\rho^2 u\|^2\right).
\nonumber
\end{align}
Moreover, we see that for any $T\geq 0$, using \eqref{333},
\begin{align*}
|\cJ(T)|+|\cK(T)| 
& \leq C \left(\|\Phi_B\|_{L^\infty}+\|\Phi_B'\|_{L^\infty}+\|R_B\|_{L^\infty}\right)(\|\px v\|^2+\|v\|^2)\\
&\leq  C B\oo^{-\frac 32} \varepsilon^2\leq C \varepsilon.
\end{align*}
Therefore, integrating~\eqref{222} on $[0,T]$, we obtain
\begin{multline*}
\int_0^T \int \left(|\px z|^2 + P_B |z|^2\right)
\leq C \varepsilon \\+ \frac CA \int_0^T \left( \|\eta_A\px u\|^2 + \frac{\oo^3}{A^2}\|\eta_A u\|^2
+ \oo\|\rho^2u\|^2\right).
\end{multline*}
Using Lemma~\ref{LE:10}, it follows that
\begin{align*}
\oo^2\int_0^T \|\rho v\|^2 
\leq C\varepsilon + \frac CA \int_0^T \left( \|\eta_A\px u\|^2 + \frac{\oo^3}{A^2}\|\eta_A u\|^2
+ \oo\|\rho^2u\|^2\right),
\end{align*}
which completes the proof of the proposition.
\end{proof}

\subsection{Coercivity property}

Lemma~\ref{LE:9} gives estimates of $v$ in terms of $u$.
The next proposition, inspired by~\cite[Lemma~6]{KMM}, shows conversely that $u$ is controlled by $v$ in the $L^2$ norm with weight $\rho^2$ and some loss. This result uses the orthogonality conditions~\eqref{eq:ortho} on $u$. 

\begin{proposition}\label{PR:4}
There exists~$C>0$ such that, for all $t\geq 0$,
\[
{\oo^2} \|\rho^2 u\| \leq  C\|\rho v\|.
\]
\end{proposition}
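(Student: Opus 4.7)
This is a coercivity estimate: it asserts that the map $u \mapsto v$ defined by $v_1 = \Xe^2 \MM S^2 u_2$ and $v_2 = -\Xe^2 S^2 \LP u_1$ is invertible, with quantitative control, on the codimension-four subspace of $(u_1, u_2)$ fixed by the orthogonality conditions~\eqref{eq:ortho}. My plan is to construct an explicit inverse using the Green's function machinery of~\S\ref{S:2.4}.

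Rewriting as $\MM S^2 u_2 = \Xe^{-2} v_1$ and $S^2 \LP u_1 = -\Xe^{-2} v_2$, each operator on the left can be inverted modulo a finite-dimensional kernel: $\MM>0$ is inverted by $J_-$; $\LP$ is inverted by $I_+$ on the codimension-one subspace $\{\po'\}^\perp$; and $S^2$ is inverted explicitly from the factorization $S = \po \cdot \px \cdot \po^{-1}$, yielding two successive antiderivatives conjugated by $\po^{\pm 1}$, with kernel $\mathrm{span}\{\po, x\po\}$. A direct check using $\LP \Lo = -\omega \po$ together with the kernels of $S^2$ and $\LP$ shows that $\ker(S^2 \LP) = \mathrm{span}\{\po', \Lo\}$ and $\ker(\MM S^2) = \mathrm{span}\{\po, x\po\}$. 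Composing these inverses yields $u_1$ and $u_2$ as explicit integral operators applied to $v$, modulo these kernels.

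The four orthogonality conditions~\eqref{eq:ortho} uniquely fix the kernel coefficients. Writing $u_1 = u_1^0 + c_1 \Lo + c_2 \po'$ and $u_2 = u_2^0 + d_1 \po + d_2 (x\po)$, the conditions $u_1 \perp \po, x\po$ and $u_2 \perp \Lo, \po'$ decouple thanks to parity ($\langle \po, \po'\rangle = 0$ and $\langle \Lo, x\po\rangle = 0$), and the coefficients are determined by the nondegenerate inner products $\langle \po, \Lo\rangle \geq c\sqrt{\oo}$ (Lemma~\ref{LE:1}) and $\langle \po', x\po\rangle = -\tfrac12 \|\po\|^2 \neq 0$. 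The weighted $L^2$ bound then follows by inserting the exponential-decay estimates for $G, H_1, H_2$ from~\S\ref{S:2.4} and for $\po$ from Lemma~\ref{LE:1} into the explicit integral formulas. With the weight $\rho(x) = \sech(\sqrt{\oo}\, x/10)$ decaying at a slower rate than the Green's functions, all convolutions converge absolutely, and tracking the $\oo$-scaling produces the factor $\oo^{-2}$ in the inverse. The smoothing factor $\Xe^{-2} = (1-\alpha\px^2)^2$ is handled by integrating by parts, transferring its two derivatives onto the smooth exponentially-decaying Green's functions, at the cost of $\alpha$-dependent constants absorbed into $C$.

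The main obstacle will be (i) the careful bookkeeping for the ``crossed'' orthogonality structure, where $u_1$ is perpendicular to $\ker(\MM S^2)$ while $u_2$ is perpendicular to $\ker(S^2 \LP)$, and (ii) the accounting for the loss of one power of $\rho$ between the two sides of the inequality, which reflects the cost of the double antiderivative in inverting $S^2$ against the exponential weight.
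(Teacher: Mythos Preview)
Your proposal is correct and follows essentially the same route as the paper: the paper also proves the estimate by explicitly inverting $S^2\LP$ and $\MM S^2$ using the Green's function formulas $I_+$ and $J_-$ of \S\ref{S:2.4} together with the double-antiderivative inverse of $S^2$, handling $\Xe^{-2}=(1-\alpha\px^2)^2$ by integration by parts (note that this involves up to four derivatives, not two) and fixing the kernel coefficients via the orthogonality conditions~\eqref{eq:ortho} and the parity decoupling you describe. The only organizational difference is that the paper performs the inversion in two successive steps (first $S^2$ then $\LP$ for $u_1$, determining the intermediate constants from $\langle \LP u_1,\Lo\rangle=\langle \LP u_1,\po'\rangle=0$; first $\MM$ then $S^2$ for $u_2$), which is precisely how one constructs a particular solution $u^0$ with the correct weighted bounds rather than a generic one with uncontrolled exponentially growing components.
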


The proof of Proposition~\ref{PR:4} follows from the next two lemmas.

\begin{lemma}\label{LE:11}
There exists~$C>0$ such that for any~$\omega\in (0,\frac 18]$ and~$g\in L^2(\R)$,
if 
\[
\langle g, \po \rangle = \langle g, x \po \rangle=0
\]
then
\[
\| \rho^2 g \| \leq C \oo^{-2}\|\rho ( \Xe^2 S^2 \LP g) \|.
\]
\end{lemma}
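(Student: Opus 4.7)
The plan is to invert the fourth-order operator $S^2 L_+$ on the subspace of $L^2$ cut out by the two orthogonality conditions, and to show that this inverse, precomposed with $X_\alpha^{-2}$, is bounded in suitable weighted norms with operator norm $O(\omega^{-2})$. I split this into a kernel identification and a quantitative coercivity estimate obtained by rescaling and a compactness argument.

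For the kernel, from $Sh = \po(h/\po)'$ one reads off $\ker S = \text{span}(\po)$, and since $S(x\po) = \po$ one gets $\ker S^2 = \text{span}(\po, x\po)$. Hence $S^2 L_+ g = 0$ in $L^2$ forces $L_+ g \in \text{span}(\po, x\po)$. Combined with $L_+(-\Lambda_\omega/\omega) = \po$ and the fact that $x\po$ is not in the range of $L_+$ on $L^2$ (since $\langle x\po, \po'\rangle = -\tfrac12\|\po\|^2 \neq 0$), together with $\ker L_+ = \text{span}(\po')$, this gives $\ker(S^2 L_+)\cap L^2 = \text{span}(\po', \Lambda_\omega)$. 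The two orthogonality hypotheses then pin $g = 0$ uniquely: the pairing $\langle \Lambda_\omega, \po\rangle \geq c\sqrt\omega > 0$ from Lemma~\ref{LE:1} kills the $\Lambda_\omega$ component and $\langle \po', x\po\rangle \neq 0$ kills the $\po'$ component.

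To extract the correct $\omega^{-2}$ factor I would rescale $y = \sqrt\omega \, x$. Under this change of variable $\po(x) = \sqrt\omega\, \Upsilon_\omega(y)$ with $\Upsilon_\omega$ uniformly bounded for $\omega\in(0,\tfrac18]$ (since $a_\omega \geq 1/\sqrt3$), and $L_+$, $S$, $X_\alpha$ become $\omega \widehat L_\omega$, $\sqrt\omega\,\widehat S_\omega$, $(1-\alpha\omega\partial_y^2)^{-1}$ respectively, so that the four powers of $\omega$ from $S^2 L_+$ absorb the $\omega^{-2}$ in the statement. The inequality reduces to an $\omega$-uniform coercivity estimate
\begin{equation*}
\|\widetilde\rho^2\, \tilde g\|_{L^2_y} \leq C\,\|\widetilde\rho\, (1-\alpha\omega\partial_y^2)^{-2}\widehat S_\omega^2 \widehat L_\omega \tilde g\|_{L^2_y}, \quad \widetilde\rho(y) = \sech(y/10),
\end{equation*}
for all $\omega \in [0,\tfrac18]$ and all $\tilde g$ obeying the rescaled orthogonality. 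I would prove this by contradiction and compactness: a sequence $(\omega_n,\tilde g_n)$ violating the bound, normalized by $\|\widetilde\rho^2 \tilde g_n\| = 1$, admits after extraction a subsequence with $\omega_n \to \omega_* \in [0,\tfrac18]$ and a weak limit $\tilde g_*$ satisfying $\widehat S_{\omega_*}^2 \widehat L_{\omega_*} \tilde g_* = 0$ and the limiting orthogonality. The kernel analysis above is valid uniformly in $\omega$, including $\omega_* = 0$ (where $\Upsilon_0 = \sqrt 2\,\sech y$), and forces $\tilde g_* = 0$; strong convergence in the weighted $L^2$ norm then contradicts the normalization.

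The main obstacle is the compactness step. The assumed smallness is a fourth-order condition precomposed with the smoothing operator $(1-\alpha\omega_n\partial_y^2)^{-2}$, which degenerates to the identity as $\omega_n\to 0$; to invert through $X_\alpha^2$ and extract local Sobolev bounds on $\tilde g_n$ one must carefully combine the weighted estimates of Lemma~\ref{LE:6} with the exponential decay of the coefficients of $\widehat S_\omega$ and $\widehat L_\omega$ at infinity, in order to upgrade weak convergence to strong convergence in the $\widetilde\rho^2$-weighted $L^2$ norm. Maintaining this uniformity all the way down to $\omega_* = 0$, where the problem collapses onto the linearization of the cubic NLS around $\sqrt 2\,\sech y$, is the most delicate technical point; one must verify that the limiting fourth-order operator $\widehat S_0^2 \widehat L_0$ has exactly the same kernel structure as for $\omega>0$, with no additional zero modes emerging in the limit.
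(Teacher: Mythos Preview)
Your approach differs substantially from the paper's, which does not argue by compactness. The paper inverts $\Xe^2 S^2 \LP$ constructively: from $h = \Xe^2 S^2 \LP g$ it writes $\px^2(\LP g/\po) = (\alpha^2 h'''' - 2\alpha h'' + h)/\po$, integrates twice to express $\LP g$ as $a(\omega^{1/2}x)\po + b\po$ plus explicit integrals in $h$, then applies the Green's operator $I_+$ of \S\ref{S:2.4} to recover $g$ up to a multiple of $\po'$. The three constants are bounded using the orthogonality conditions together with the pairings $\langle\po,\Lo\rangle$ and $\langle\po',x\po\rangle$; the terms in $h'',h''''$ coming from $\Xe^{-2}$ are absorbed by integrating by parts against the explicit kernels $G,\po'$, and all $\omega$-powers are tracked by hand.

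Your kernel identification and rescaling are correct, but the gap you yourself flag---upgrading weak to strong convergence in $L^2(\widetilde\rho^4)$---is not a technicality; it is essentially the entire content of the lemma, and there is a structural reason your outline does not close it. The hypothesis $\|\widetilde\rho\,h_n\|\to 0$ controls only a \emph{smoothed} quantity: the equation for $\tilde g_n$ reads $\widehat S_{\omega_n}^2\widehat L_{\omega_n}\tilde g_n = (1-\alpha\omega_n\partial_y^2)^2 h_n$, whose right-hand side contains $h_n''$ and $h_n''''$, and smallness of $\|\widetilde\rho\,h_n\|$ says nothing about these. Hence no interior elliptic regularity for $\tilde g_n$ is available, and the estimates of Lemma~\ref{LE:6} go the wrong way for this purpose (they bound $\|\rho\,\Xe(\cdot)\|$ from above, not below). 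The paper's explicit integral representation is precisely what allows the derivatives of $h$ to be transferred, via integration by parts, onto the exponentially decaying kernels of $I_+$; without that device the $\Xe^2$ regularization is an obstruction rather than a convenience. One might hope to run a compactness argument for the unregularized operator $S^2\LP$ and then pass to the $\Xe^2$ version, but that passage would require an estimate of the form $\|\rho\,S^2\LP g\|\le C\|\rho\,\Xe^2 S^2\LP g\|$, which is false in general.
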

\begin{proof}
Let~$g$ be as in the statement of the lemma and let~$h = \Xe^2 S^2 \LP g$.
We have
\[
 \px^2 \left( \frac { \LP g}\po\right) = \alpha^2 \frac{h''''}{\po}-2\alpha \frac{h''}{\po} +\frac{h}{\po} .
\]
To simplify notation, we denote by~$f_j$ functions of class~$\cC^\infty$ whose expression may change from line to line, and satisfying
\[
|f_j(x) |\leq C \omega^{- \frac12}e^{\sqrt{\omega}|x|} \mbox{ on~$\R$.}
\]
We also denote~$ \px^{-1} = \int_0^x$ and~$ \px^{-2}= \px^{-1}\cdot \px^{-1}$.
We check
\begin{align*}
\frac{h''}{\po} & = \left(\frac{h}\po\right)'' + \left(f_3{h}\right)'+f_2 h,\\
\frac{h''''}{\po}& =\left(\frac{h}{\po}\right)''''+ \left(f_3{h}\right)'''+\left(f_2h\right)''+\left(f_1 h\right)'+f_0 h.
\end{align*}
Thus,
\[
\px^2 \left( \frac { \LP g}\po\right) = \alpha^2\left(\frac{h}{\po}\right)''''+ \alpha^2\left(f_3{h}\right)'''
+\alpha \left(f_2h\right)'' + \alpha \left(f_1 h\right)'+f_0 h.
\]
By integration and multiplication by~$\po$, we obtain
\begin{equation*}
\LP g = a ( \omega^{\frac 12} x) \po + b\po + \po \sum_{k=-2}^2 \alpha^{\lfloor \frac{k+3}2\rfloor} \px^{k} (f_{k+2} h)
\end{equation*}
where~$a$ and~$b$ are integration constants. We claim that
\[
|a|+|b|\leq C \omega^{-\frac 54} \|\rho h\|.
\]
To prove this, we note that~$\langle \LP g, \Lo\rangle=\langle \LP g, \po' \rangle = 0$,
by~$ \LP \Lo = - \omega \po$,~$ \LP \po' = 0$ and~$\langle g, \po \rangle=0$.
Moreover, $\langle \Lo,x\po\rangle=0$ by parity.
Taking the scalar product of the above expression of~$ \LP g$ by~$\Lo$, we have
\[
|b| \leq \frac C { |\langle \po,\Lo \rangle |} 
\sum_{k=-2}^2 \alpha^{\lfloor \frac{k+3}2 \rfloor} | \langle \po \Lo, \px^{k} ( f_{k+2} h) \rangle |.
\]
We recall from Lemma~\ref{LE:1} that~$|\langle \po,\Lo \rangle |\geq C \omega^\frac12$.
For~$k=0$, we have by the Cauchy-Schwarz inequality,
$| \langle \po \Lo, f_2 h\rangle |\leq C \omega^\frac14 \|\rho h\|$.
For~$k=1,2$, integrating by parts,
\begin{equation*}
 |\langle \po \Lo, \px^{k} (f_{k+2} h)\rangle | = |\langle \px^{k} (\po \Lo), f_{k+2} h\rangle |
 \leq C \omega^{\frac k2+\frac14} \|\rho h\|.
\end{equation*}
For~$k=-1$, we have
\[
\left| \po \Lo \int_0^x f_{1} h\right|
\leq C \omega^{\frac 12}\rho \int_0^x \rho^2 |h| \leq C \omega^{\frac 14}\rho \|\rho h\|,
\]
and so~$| \langle \po \Lo, \px^{-1} ( f_{1} h) \rangle |\leq C \oo^{-\frac 14} \|\rho h\|$.
Last, for~$k=-2$, we have
\[
\left| \po \Lo \int_0^x \int_0^y f_{0} h\right|
\leq C \omega^{\frac 12} \rho \int_0^x \rho \int_0^y \rho^2 |h| \leq C \omega^{-\frac 14} \rho \|\rho h\|,
\]
and so~$| \langle \po \Lo, \px^{-2} (f_{0} h) \rangle |\leq C \omega^{-\frac 34}\|\rho h\|$.
Thus,~$|b|\leq C \omega^{-\frac 54} \|\rho h\|$.
The proof of the estimate for~$a$ is similar.

Recall the notation $I_+$ from \S \ref{S:2.4}.
We have~$g = I_+ [ \LP g] + c \po'$ where~$c$ is a constant. 
In particular, we obtain
\[
g = a I_+ [(\omega^\frac12x)\po] + b I_+ [\po] 
+ \sum_{k=-2}^{2} \alpha^{\lfloor \frac{k+3}2\rfloor} I_+[\po \px^k (f_{k+2} h)] + c \po'.
\]
We estimate each term above for~$x\geq 0$.
We check easily that
\[
|I_+ [(\omega^\frac12x)\po]|+ |I_+ [\po]|\leq C \omega^{-\frac12}
\]
and so~$|a I_+ [(\omega^\frac12x)\po]| + |b I_+ [\po] |\leq C \omega^{-\frac 74}\|\rho h\|$.
Next, for~$k=0$, we have
\[
I_+[\po f_{2} h] = - \po' \int_0^x G\po f_2 h - G\int_x^\infty \po'\po f_2h.
\]
Thus,
\begin{align*}
|I_+[\po f_{2} h]|
&\leq C \omega^{-\frac 12} \ee^{-\sqrt{\omega} x} \int_0^x \ee^{\sqrt{\omega}y}|h|
+ C \omega^{-\frac 12}\ee^{\sqrt{\omega} x} \int_x^\infty \ee^{-\sqrt{\omega}y}|h|\\
&\leq C \omega^{-\frac 12}\rho^{-\frac32}\int_0^\infty \rho^\frac32 |h|
\leq C \omega^{-\frac34}\rho^{-\frac32}\|\rho h\|.
\end{align*}
For~$k=1$, by integration by parts,
\begin{align*}
I_+[\po \px(f_{3} h)]
&= - \po' \int_0^x G\po \px(f_{3} h) - G\int_x^\infty \po'\po \px(f_{3} h)\\
&= \po' \int_0^x (G\po)' f_{3} h + G \int_x^\infty (\po'\po)' f_{3} h+ c_3 \po'
\end{align*}
where~$c_3=G(0)\po(0)f_3(0)h(0)$.
Proceeding as before, we obtain
\[
| I_+[\po \px(f_{3} h)] - c_3 \po'| \leq C \rho^{-\frac32} \omega^{-\frac 14} \|\rho h\|.
\]
For~$k=2$, by integration by parts and using~$\po'' G - \po' G' =1$, we compute
\begin{align*}
I_+[\po \px^2(f_{4} h)]
&= - \po' \int_0^x G\po \px^2(f_{4} h) - G\int_x^\infty \po'\po \px^2(f_{4} h)\\
&= -\po f_4 h - \po' \int_0^x (G\po)'' f_{4} h - G \int_x^\infty (\po'\po)'' f_{4} h + c_4 \po'
\end{align*}
where~$c_4=-G(0)\po(0)(f_4h)'(0)+(G\po)'(0)f_4(0)h(0)$. Proceeding as before, we obtain for~$x\geq 0$,
\[
| I_+[\po \px^2(f_{4} h)] - c_4 \po'| 
\leq C |h| + \rho^{-\frac32}\omega^\frac14\|\rho h\|.
\]
For~$k=-1$, we have
\[
I_+[\po \px^{-1}(f_1 h)] = - \po' \int_0^x G\po \int_0^y (f_1 h) \ud y - G\int_x^\infty \po'\po \int_0^y (f_1h) \ud y.
\]
Thus,
\[
|I_+[\po \px^{-1}(f_1 h)]| 
\leq C \omega^{-\frac12}\rho^{-\frac32}\int_0^\infty\rho^\frac14\int_0^y \rho^{\frac 54}|h| \ud y
\leq C \omega^{-\frac 54}\rho^{-\frac32}\|\rho h\|.
\]
Proceeding similarly, 
\[
|I_+[\po \px^{-2}(f_0 h)]| \leq C \omega^{-\frac 74}\rho^{-\frac32}\|\rho h\|.
\]
We have just proved that
for~$x\geq 0$
\[
|g-\tilde c \po'| \leq C |h|+\omega^{-\frac 74} \rho^{-\frac32}\|\rho h\|
\]
where~$\tilde c=c+c_3+c_4$ .
This estimate also holds for~$x\leq 0$ with the same constant~$\tilde c$.
Taking the scalar product by~$x \po$ and using~$\langle g,x\po\rangle =0$, we obtain
\[
|\tilde c| \leq C \omega^{-2}\|\po\|^{-2} \|\rho h\|\leq C \omega^{-\frac {11}4} \|\rho h\|.
\]
Therefore,
\[
|g| \leq C |g-\tilde c \po'|+C |\tilde c| |\po'|
\leq C |h|+C\omega^{-\frac 74}\rho^{-\frac32} \|\rho h\|.
\]
Multiplying by~$\rho^2$ and taking the~$L^2$ norm, we obtain the result.
\end{proof}

\begin{lemma}
There exists~$C>0$ such that for any~$\omega\in (0,\frac 18]$ and~$g\in L^2(\R)$,
if
\[
\langle g, \Lo \rangle = \langle g, \po' \rangle=0
\]
then
\[
\|\rho^2 g\| \leq C \oo^{-2} \| \rho ( \Xe^2 \MM S^2 g)\|.
\]
\end{lemma}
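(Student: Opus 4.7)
The strategy is to parallel the proof of Lemma~\ref{LE:11}, with the order of inversion reversed: first undo $\MM$ by its Green-function inverse from \S\ref{S:2.4}, then undo $S^2$ by double integration, using the orthogonality conditions to fix the two integration constants arising from $\ker S^2 = \mathrm{span}\{\po,x\po\}$. Setting $h = \Xe^2 \MM S^2 g$ and applying $\Xe^{-2} = (1-\alpha\px^2)^2$ gives
\begin{equation*}
\MM S^2 g = h - 2\alpha h'' + \alpha^2 h''''.
\end{equation*}

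Since $\MM \geq \omega > 0$, the operator $\MM$ is invertible via $J_-$, giving $S^2 g = J_-[h-2\alpha h''+\alpha^2h'''']$. Repeated integration by parts transfers the derivatives off of $h$ onto $H_1, H_2$, producing, schematically, a representation of the form $S^2 g = \sum_{k} \alpha^{e_k}\,\mathcal{O}_k[h]$ where $\mathcal{O}_k$ are integral operators with kernels controlled by the pointwise bounds $|H_j^{(k)}(x)| \leq C_k \omega^{-1/4+k/2}\ee^{\mp\sqrt{\omega}x}$ from \S\ref{S:2.4}. I would then use the identity $S^2 g = \po(g/\po)''$ and integrate twice to obtain
\begin{equation*}
g = \po(A x + B) + \po \int_0^x \!\!\int_0^y \frac{(S^2 g)(z)}{\po(z)}\,\ud z\,\ud y,
\end{equation*}
for constants $A, B \in \R$. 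The orthogonality conditions then decouple cleanly: by parity $\langle \po, \po'\rangle = 0$ and $\langle x\po,\Lo\rangle = 0$, whereas $\langle \po, \Lo\rangle \geq c\sqrt{\omega}$ by Lemma~\ref{LE:1} and $\langle x\po,\po'\rangle = -\tfrac12\|\po\|^2$ by integration by parts. Thus $\langle g,\Lo\rangle = 0$ determines $B$ and $\langle g,\po'\rangle = 0$ determines $A$, each bounded by the integral remainder tested against the exponentially decaying functions $\Lo$ and $\po'$.

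The main obstacle is the pointwise/weighted bookkeeping needed to extract the correct power $\omega^{-2}$. Four derivatives must be moved (two from $\alpha^2 h''''$, two coming from the second-order nature of $\MM^{-1}$) while, simultaneously, the double integration $\po\int_0^x\int_0^y (S^2 g)/\po$ is dangerous because $1/\po$ grows like $\omega^{-1/2}\ee^{\sqrt{\omega}|x|}$. As in the proof of Lemma~\ref{LE:11}, the fix is to replace this naive double integral by a sum of integration-by-parts rewritings of the form $\po\,\px^k(\tilde f_{k+2} h)$ (for $k \in \{-2,-1,0,1,2\}$), each with a pointwise bound of the type $|\cdot| \leq C\omega^{e}\rho^{-3/2}\|\rho h\|$. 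Tracking these powers, multiplying by $\rho^2$ and taking the $L^2$ norm would then yield $\|\rho^2 g\| \leq C\omega^{-2}\|\rho h\|$.
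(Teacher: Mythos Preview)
Your strategy---invert $\Xe^2$, then $\MM$ via $J_-$, then undo $S^2$ by the double integration $g=a(\omega^{1/2}x)\po+b\po+\po\int_0^x\!\int_0^y\po^{-1}S^2g$, and fix $a,b$ from $\langle g,\Lo\rangle=\langle g,\po'\rangle=0$---is exactly the paper's. One point in your bookkeeping sketch does not quite land: the template $\po\,\px^k(\tilde f_{k+2}h)$ from Lemma~\ref{LE:11} was tailored to the situation where the integrand was $h/\po$ and its derivatives, i.e.\ local in $h$; here $S^2g/\po=(1/\po)J_-[h-2\alpha h''+\alpha^2 h'''']$ is nonlocal, so that rewriting is not available as stated. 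What the paper does instead is integrate by parts \emph{inside} $J_-$, using the Wronskian relations $H_1H_2'-H_1'H_2=1$ and $H_1''H_2-H_1H_2''=0$, to reduce $J_-[h'']$ to $-h+(\text{integral terms in }h)$ and $J_-[h'''']$ to $h''+(\text{smoother terms})$. The surviving local piece $h''$ then requires one further integration by parts in $\po\int_0^x\!\int_0^y h''/\po$ (using $\po''\po-2(\po')^2=-\omega\po^2+\tfrac13\po^6$), which produces \emph{extra} integration constants $a_2,b_2$ that must be absorbed into $a,b$ before the orthogonality conditions are applied. Apart from this wrinkle, your outline matches the paper's proof.
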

\begin{proof}
Let~$g$ be as in the statement of the lemma and let~$h = \Xe^2 \MM S^2 g$.
We have
\[
\MM S^2 g = h - 2 \alpha h'' +\alpha^2 h''''
\]
and so using the notation $J_-$ from \S \ref{S:2.4},
\[
 \px^2 \left(\frac g\po\right)
= \frac 1\po\left( J_-\left[h\right] - 2 \alpha J_-\left[h''\right] + \alpha^2 J_- \left[ h''''\right]\right).
\]
By integration, we obtain
\begin{align*}
g 
&= a_1 (\omega^\frac 12 x) \po + b_1 \po+ \po \int_0^x \int_0^y \frac 1\po J_- \left[h\right]\\
& \quad - 2 \alpha \po \int_0^x \int_0^y \frac 1\po J_- \left[ h''\right] 
+ \alpha^2 \po \int_0^x \int_0^y \frac 1\po J_- \left[ h''''\right].
\end{align*}
First, we note that
\[
|J_-[h]|\leq C \omega^{-\frac 12} \rho^{-\frac 32}\int \rho^{\frac 32} |h|
\leq C \omega^{-\frac 34} \rho^{-\frac 32} \|\rho h\|.
\]
For~$x\geq 0$,
\begin{equation*}
 \po \int_0^x \int_0^y \frac 1\po |J_-[h]|
\leq C \omega^{-\frac 34} \|\rho h\| \rho^3\int_0^x \int_0^y \rho^{-\frac32} 
\leq C \omega^{-\frac 74} \rho^{-\frac 32}\|\rho h\|.
\end{equation*}
For the term with~$h''$, we use integration by parts and~$H_1H_2'-H_1'H_2=1$,
\[
J_-[h'']
=-h+ H_1\int_{-\infty}^x H_2'' h + H_2 \int_x^\infty H_1'' h
\]
Thus,
\[
|J_-[h'']|\leq C |h| + C\omega^{\frac 14}\rho^{-\frac 32}\|\rho h\|
\]
and for~$x\geq 0$,
\begin{equation*}
 \po \int_0^x \int_0^y \frac 1\po |J_-[h'']|
\leq C \omega^{-\frac 34} \rho^{-\frac 32}\|\rho h\|.
\end{equation*}
We continue with the term involving~$h''''$, integrating by parts and
using the relation~$H_1''H_2-H_1H_2''=0$,
\[
J_-[h'''']
=h''+(H_1'H_2''-H_1''H_2')h + H_1\int_{-\infty}^x H_2''''h+H_2\int_x^\infty H_1''''h.
\]
For the last three terms in the right hand side, we proceed as before. For the first term, we further compute by integration by parts
\begin{align*}
\po\int_0^x\int_0^y\frac{h''}{\po}
& =a_2 (\omega^\frac 12 x) \po + b_2 \po+ h+ 2\po \int_0^x \frac{h\po'}{\po^2}\\
&\quad + \po \int_0^x \int_0^y h \left(\frac{\omega}{\po}-\frac 13 \po^3\right)
\end{align*}
where we have used~$\po''\po-2(\po')^2=-\omega \po^2 + \frac 13 \po^6$
(from~\eqref{eq:id}) and we proceed as before.
We obtain, for~$a=a_1+a_2$ and~$b=b_1+b_2$, for~$x\geq 0$ 
\[
|g - a (\omega^\frac 12 x) \po - b \po|
\leq C |h| + C\omega^{-\frac 74}\rho^{-\frac32}\|\rho h\|.
\]
This estimate is also true for~$x\leq 0$ with the same constants~$a$ and~$b$.
Using the orthogonality relations~$(g,\Lo)=(g,\po')=0$ to estimate~$a$ and~$b$, we complete the proof as the one
of Lemma~\ref{LE:11}.
\end{proof}

\subsection{End of the proof of Theorem~\ref{TH:1}}\label{s:3.5}
Using first Proposition~\ref{PR:4}, then Proposition~\ref{PR:3} and last Proposition~\ref{PR:2}, we obtain, for all~$T>0$,
\begin{align*}
\oo^6 \int_0^T \|\rho^2 u\|^2 \ud t
& \leq C\oo^2 \int_0^T \|\rho v\|^2\ud t\\
& \leq C\varepsilon + \frac CA \int_0^T\left( \|\eta_A\px u\|^2 + \frac{\oo^3}{A^2} \|\eta_A u\|^2
+ \oo \|\rho^2 u\|^2\right) \uds t\\
& \leq C\varepsilon + \frac {C\oo}A \int_0^T \|\rho^2 u\|^2 \ud t.
\end{align*}
Thus, for~$A$ large enough, independent of $\varepsilon$, but dependent on $\oo$,
\[
\oo\int_0^T \|\rho^2 u\|^2\ud t\leq C \varepsilon \oo^{-5} .
\]
Now, $A$ is fixed to such value.
Using again Proposition~\ref{PR:2}, and passing to the limit $T\to \infty$, we obtain
\[
\int_0^\infty \left( \|\eta_A \px u\|^2  + \oo^3 \|\eta_A u\|^2 
+\oo \|\rho^2 u\|^2\right) \uds t\leq C \varepsilon\oo^{-5}.
\]

From the equation~\eqref{eq:u} of $u$, we compute
\begin{align*}
\frac d{dt} \int |u|^2 \rho^4
&= \int (u_1 (\px u_2) - (\px u_1)u_2) (\rho^4)'
+\int (2\po^2 - 4 \po^4) u_1u_2\rho^4\\
& \quad +\int (\theta_2 u_1 + m_2 u_1 -q_2 u_1 - \theta_1 u_2 -m_1u_2+q_1 u_2)\rho^4.
\end{align*}
Thus, using $|\rho'|\leq C\rho$, $\|u\|_{L^\infty} \leq C$ and \eqref{eq:modulation}, we obtain
\begin{equation}\label{eq:dt}
\left| \frac d{dt} \int |u|^2 \rho^4\right|
\leq C \int \rho^4 \left(|\px u|^2 + |u|^2\right).
\end{equation}
Since $\int_0^\infty \|\rho^2 u\|^2 \ud t<\infty$, there exists a sequence
$t_n\to +\infty$ such that
\[
\lim_{n\to +\infty} \|\rho^2 u(t_n)\| = 0.
\]
Let $t\geq 0$ and $n$ be such that $t_n>t$.
Integrating~\eqref{eq:dt} on $(t,t_n)$, we obtain
\[
\|\rho^2 u(t)\|^2 \leq \|\rho^2 u(t_n)\|^2 + C\int_0^{t_n} \left( \|\rho^2 \px u\|^2 + \|\rho^2 u\|^2\right)\uds t'.
\]
Passing to the limit $n\to +\infty$,
\[
\|\rho^2 u(t)\|^2 \leq C\int_t^{\infty} \left( \|\rho^2 \px u\|^2 + \|\rho^2 u\|^2\right)\uds t'.
\]
Since $\int_0^{\infty} \left( \|\rho^2 \px u\|^2 + \|\rho^2 u\|^2\right)\uds t\leq 
\int_0^\infty \left( \|\eta_A \px u\|^2 + \|\eta_A u\|^2\right)\uds t<\infty$, we have
\[
\lim_{t\to +\infty} \int_t^{\infty} \left( \|\rho^2 \px u\|^2 + \|\rho^2 u\|^2\right)\uds t'=0
\]
and thus
\[
\lim_{t\to +\infty}\|\rho^2 u(t)\|=0.
\]

For any $x,y\in \R$, write
\[
\rho^2(x)|u(t,x)|^2
=\rho^2(y)|u(t,y)|^2 + 
\int_x^y \left[2 \Re\left\{\bar u(t)\px u(t)\right\}\rho^2 + |u(t)|^2(\rho^2)'\right]
\]
so that by the Cauchy-Schwarz inequality,
\[
\rho^2(x)|u(t,x)|^2\leq \rho^2(y)|u(t,y)|^2 + C\|u(t)\|_{H^1(\R)} \|\rho^2 u(t)\|
\]
Integrating for $y\in [0,1]$ and then using \eqref{eq:small}, we obtain
\begin{equation*}
\rho^2(x)|u(t,x)|^2
\leq C\|u(t)\|_{H^1(\R)} \|\rho^2 u(t)\|
\leq C \varepsilon \|\rho^2 u(t)\|.
\end{equation*}
Thus,
\[
\lim_{t\to +\infty}\sup_{x\in \R} \left\{\rho(x)|u(t,x)|\right\}=0.
\]
By \eqref{eq:modulation}, we have
$
|\dot \beta | + | \dot \omega| \leq C \|\rho^2 u\|^2$.
From $\int_0^\infty \|\rho^2 u\|^2\ud t<\infty$, it follows that both $\beta(t)$
and $\omega(t)$ have finite limits as $t\to+\infty$, denoted respectively by~$\beta_+$ and $\omega_+$.
By \eqref{eq:small}, we infer that $|\beta_+|+|\omega_+-\oo|\leq C \varepsilon$.
Last, by~\eqref{def:u} and the triangle inequality, we have
\[
\big|\psi(t,x+\sigma(t)) - \ee^{\ii\gamma(t)} \ee^{\ii \beta_+ x } \phi_{\omega_+}(x)\big|
\leq \big| \ee^{\ii \beta(t) x } \phi_{\omega(t)}(x)
- \ee^{\ii \beta_+ x } \phi_{\omega_+}(x)\big| + |u(t,x)| .
\]
The elementary observation
\[
\lim_{t\to +\infty} \sup_{x\in \R} \big| \ee^{\ii \beta(t) x } \phi_{\omega(t)}(x)
- \ee^{\ii \beta_+ x } \phi_{\omega_+}(x)\big|=0
\]
then completes the proof of Theorem~\ref{TH:1}.

\appendix
\section{}
\begin{lemma}
There exists $B_0\geq 1$ such that for any $B\geq B_0$, the following is true on $\R$, 
\begin{align}
0\leq y \sinh(y) \ee^{\frac1{4B} |y|} & \leq \frac 13 \left( 1+ \frac 1{\sqrt3}\cosh(y) \right)^3,\label{eq:A1} \\
|y| \ee^{\frac1{4B} |y|} & \leq \frac 23 |\sinh(y)|\left(1+ \frac 1{\sqrt3} \cosh(y)\right)\label{eq:A3}.
\end{align}
\end{lemma}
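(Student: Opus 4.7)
By evenness of both sides in $y$, it suffices to establish both inequalities for $y \geq 0$. The plan is to split $[0,\infty)$ into a compact core $[0, M]$ and an exponential tail $[M, \infty)$, handled by separate arguments, with $M$ fixed first and $B_0$ chosen large enough afterwards.

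On the tail $y \geq M$, I would use crude exponential bounds: from $\sinh(y) \leq \tfrac 12 \ee^y$ and $(1 + \cosh(y)/\sqrt 3)^3 \geq (\cosh(y)/\sqrt 3)^3 \geq \ee^{3y}/(24\sqrt 3)$, the ratio of the left- to right-hand side of \eqref{eq:A1} is bounded by $36\sqrt 3 \, y\, \ee^{y(1/(4B) - 2)}$, and an analogous argument for \eqref{eq:A3} yields the bound $C\, y\, \ee^{y(1/(4B) - 1)}$. Both tend to $0$ as $y\to\infty$ for any $B \geq 1$, so $M$ can be chosen large enough, uniformly in $B \geq 1$, that both inequalities hold on $[M,\infty)$.

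On the compact set $[0, M]$, I would compare to the ``limiting'' inequalities obtained by replacing $\ee^{y/(4B)}$ by $1$:
\[
y \sinh(y) \leq \tfrac 13 (1 + \cosh(y) / \sqrt 3)^3 \quad\mbox{and}\quad y \leq \tfrac 23 \sinh(y) \,(1 + \cosh(y) / \sqrt 3).
\]
The first is trivial at $y = 0$ and once again dominated by $\cosh^3$ at infinity; for the second, dividing by $y$ and letting $y \to 0^+$ gives $1 \leq \tfrac 23 (1 + 1/\sqrt 3) = (6 + 2\sqrt 3)/9 \approx 1.052$, a strict inequality, while at infinity $\sinh\cdot\cosh$ dominates again. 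Assuming both limiting inequalities hold \emph{strictly} on $[0,\infty)$ (so with a positive minimum gap on the compact set $[0, M]$), and since $\ee^{y/(4B)} \to 1$ uniformly on $[0, M]$ as $B \to \infty$, this gap absorbs the extra factor once $B_0$ is large enough.

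The main obstacle is the explicit verification of strict inequality for the two limiting inequalities on $[0,\infty)$. The tight spot is near $y = 0$ in \eqref{eq:A3}, where the margin between $1$ and $(6 + 2\sqrt 3)/9$ is only about $5\%$, so very crude bounds will not suffice. I would handle this either by reducing each inequality to a polynomial inequality in $c = \cosh(y)$ via $\sinh^2(y) = c^2 - 1$ and controlling $y$ in terms of $c$, or by locating the (unique) interior maximum of each ratio function on $(0,\infty)$ and checking its value directly.
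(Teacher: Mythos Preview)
Your overall strategy matches the paper's: first verify the \emph{limiting} strict inequalities (with $\ee^{|y|/(4B)}$ replaced by $1$), then use a compact-core / exponential-tail split together with uniform convergence $\ee^{|y|/(4B)}\to 1$ on compacta to produce $B_0$. The paper compresses this last step into the phrase ``follows from standard arguments,'' which is exactly the argument you spell out.

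Where the two differ is in how much work the limiting inequalities actually require. For \eqref{eq:A3} you write that ``very crude bounds will not suffice'' near $y=0$ because the margin is only about $5\%$. In fact the crudest possible bounds do suffice, and this is what the paper does: from $|\sinh(y)|\geq |y|$ and $\cosh(y)\geq 1$ one gets
\[
\tfrac 23 |\sinh(y)|\bigl(1+\tfrac{1}{\sqrt 3}\cosh(y)\bigr) \;\geq\; \tfrac 23\bigl(1+\tfrac{1}{\sqrt 3}\bigr)\,|y| \;>\; 1.05\,|y|,
\]
a uniform $5\%$ margin on all of $\R$, not just near $0$. So no maximization or reduction to polynomials in $\cosh(y)$ is needed for \eqref{eq:A3}. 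For \eqref{eq:A1}, the paper does essentially what you propose as one option: split at $|y|=\tfrac54$, use $y\sinh(y)\leq \tfrac54\cosh(y)$ on the inner region and $y\sinh(y)\leq \tfrac45(\cosh^2(y)-1)$ on the outer region, and reduce each case to an elementary cubic inequality in $a=\cosh(y)$.

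In short, your proposal is correct and aligned with the paper; the only substantive difference is that you overestimate the difficulty of the limiting case of \eqref{eq:A3}.
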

\begin{proof}
First, we check the following inequality
\begin{equation*}
 y\sinh(y) < \frac 13 \left( 1+ \frac 1{\sqrt3}\cosh(y) \right)^3 .
\end{equation*}
For $|y|\leq \frac 54$, we have
$y\sinh(y) \leq \frac 54 \cosh(y)$
and the inequality in this case is a consequence of the fact that for any $a\geq 0$, $15 a < 4 (1+a/\sqrt{3})^3$.

For $|y|\geq \frac 54$, it is easy to check that $|y|\leq \frac 54 |\sinh(y)|$ and so
$y\sinh(y) \le\frac 45 \sinh^2(y)= \frac 45 (\cosh^2(y)-1)$. The inequality is then a consequence of the
fact that for $a\geq 0$, $12(a^2-1)< 5(1+a/\sqrt{3})^3$.

Second, the inequality
\[
1.05 \, |y| \leq \frac 23 |\sinh(y)|\left(1+ \frac 1{\sqrt3} \cosh(y)\right) 
\]
is checked easily since $\cosh(y)\geq 1$, $|\sinh(y)|\geq |y|$ and $\frac 23 (1+ \frac 1{\sqrt3})> 1.05$.
The existence of $B_0$ such that \eqref{eq:A1} and \eqref{eq:A3} hold for $B=B_0$ then follows from standard arguments. Last,~\eqref{eq:A1} and \eqref{eq:A3} are also true for any $B\geq B_0$.
\end{proof}

\end{document}